\definecolor{vegasgold}{rgb}{0.77, 0.7, 0.35}
\definecolor{darkgoldenrod}{rgb}{0.72, 0.53, 0.04}
\definecolor{gold(metallic)}{rgb}{0.83, 0.69, 0.22}
\DeclareFontFamily{U}{wncy}{}
\DeclareFontShape{U}{wncy}{m}{n}{<->wncyr10}{}
\DeclareSymbolFont{mcy}{U}{wncy}{m}{n}
\DeclareMathSymbol{\Sh}{\mathord}{mcy}{"58}
\newtheorem{theorem}{Theorem}[section]
\newtheorem{lemma}[theorem]{Lemma}
\newtheorem{ass}[theorem]{Assumption}
\newtheorem*{theorem*}{Theorem}
\newtheorem*{ass*}{Assumption}
\newtheorem{definition}{Definition}[section]
\newtheorem{remark}[theorem]{Remark}
\newtheorem{proposition}[theorem]{Proposition}
\newtheorem{question}[theorem]{Question}
\newcommand{\Gp}{\operatorname{G}_{\{p\}}}
\newcommand{\g}{\mathfrak{g}}
\newcommand{\Z}{\mathbb{Z}}
\newcommand{\Q}{\mathbb{Q}}
\newcommand{\F}{\mathbb{F}}
\newcommand{\cC}{\mathcal{C}}
\newcommand{\A}{\mathcal{A}}
\newcommand{\op}[1]{\operatorname{#1}}
\newcommand\mtx[4] { \left( {\begin{array}{cc}
 #1 & #2 \\
 #3 & #4 \\
 \end{array} } \right)}
\numberwithin{equation}{section}
\begin{document}

\title[Galois representations ramified at one prime]{Galois representations ramified at one prime and with suitably large image}

\author[A.~Ray]{Anwesh Ray}
\address[Ray]{Centre de recherches mathématiques,
Université de Montréal,
Pavillon André-Aisenstadt,
2920 Chemin de la tour,
Montréal (Québec) H3T 1J4, Canada}
\email{anwesh.ray@umontreal.ca}

\maketitle

\begin{abstract}
Let $p\geq 7$ be a prime and $n>1$ be a natural number. We show that there exist infinitely many Galois representations $\varrho:\op{Gal}(\bar{\Q}/\Q)\rightarrow \op{GL}_{n}(\Z_p)$ which are unramified outside $\{p, \infty\}$ with large image. More precisely, the Galois representations constructed have image containing the kernel of the mod-$p^t$ reduction map $\op{SL}_n(\Z_p)\rightarrow \op{SL}_n(\Z/p^t\Z)$, where $t:=8(n^2-n)\left(3+\lfloor \op{log}_p(2^n+1)\rfloor\right)+8$. The results are proven via a purely Galois theoretic lifting construction. When $p\equiv 1\mod{4}$, our results are conditional since in this case, we assume a very weak version of Vandiver's conjecture. 
\end{abstract}

\section{Introduction}
\par Let $p$ be an odd prime number and $n>1$ be an integer. The inverse Galois problem posits that Galois extensions of $\Q$ (and other fields of interest) may be constructed so as to realize any finite group. Similarly, it is of natural interest for one to ponder over the existence of Galois representations with large image in a group of the form $\mathcal{G}(\Z_p)$, where $\mathcal{G}$ is a smooth reductive group-scheme over $\Z_p$. We consider the simplest case of natural interest, i.e., that of $\op{GL}_n$. The study of Galois representations is of central importance in number theory. The case when $n=2$ is special, since in this case, such representations abundantly arise from elliptic curves and modular forms. However, for larger values of $n$, it is natural to consider higher dimensional motives. For $n>2$, it proves difficult to construct geometric Galois representations whose image contains an open subgroup of $\op{SL}_n(\Z_p)$. For instance, a Galois representation that arises from the $p$-primary torsion in an abelian variety defined over $\Q$, must respect the Weil pairing, and this restricts the image.
\par Instead of relying on the study of motives and their associated Galois representations, we resort to purely Galois theoretic methods, and our constructions are based in Galois deformation theory. One hopes to construct large Galois representations with special ramification properties that do not necessarily arise from automorphic forms or motives in algebraic geometry. Let $E:=\Q(\mu_p)$ be the cyclotomic field extension of $\Q$ generated by the $p$-th roots of unity and $\widetilde{E}$ the maximal pro-$p$ extension of $E$ which is unramified at all finite primes $v$ of $E$ such that $v\nmid p$. A result of Shafarevich shows that when $p$ is a regular prime (i.e., $p$ does not divide the class number of $E$), the Galois group $\op{Gal}(\widetilde{E}/E)$ is a free pro-$p$ group with $\left(\frac{p+1}{2}\right)$-generators. R.~Greenberg uses this result to construct Galois representations $\rho:\op{Gal}(\bar{\Q}/\Q)\rightarrow \op{GL}_n(\Z_p)$ that are unramified at all finite primes outside $\{p\}$, whenever $p\geq 4\lfloor n/2\rfloor+1$ is a regular prime (cf. \cite[Proposition 1.1]{greenberg2016galois}). N.~Katz \cite{katz2020note} adopts a different approach, and constructs finitely ramified Galois representations with large image in $\op{GL}_n(\Z_p)$ for $p\equiv 1\mod{3}$ and $p\equiv 1\mod{4}$ and all $n\geq 6$. Such Galois representations are defined over certain cyclotomic fields $\Q(\mu_N)$ and are constructed via geometric techniques and a beautiful application of the Hilbert irreducibility theorem. Galois representations that are finitely ramified and with big image in more general reductive groups are constructed by S.~Tang (cf. \cite{tangalgebraic}). The above mentioned Galois representations are typically ramified at a large number of primes, especially when the dimension of the group $\mathcal{G}$ is large. Returning to the original result of Greenberg, consider the following refined question \cite[Question 1.1]{ray2021constructing}.

\begin{question}\label{question}
Let $p$ be any prime number and $n>1$ be an integer. Does there exist a continuous Galois representation $\rho:\op{Gal}(\bar{\Q}/\Q)\rightarrow \op{GL}_n(\Z_p)$ with suitably large image? If so, can one control the set of primes at which it may ramify? In particular, can such Galois representations be constructed so as to be unramified at all primes $\ell\notin \{p,\infty\}$?
\end{question}

By \emph{having suitably large image}, we mean that the image contains an open subgroup of $\op{SL}_n(\Z_p)$. There are parallels between the above question certain refinements of the inverse Galois problem. For instance, Abhyankar's conjecture posits the existence of finite Galois extensions of function fields in positive characteristic, with prescribed Galois group and a predetermined set of ramification subject to certain constraints. Such Galois extensions were constructed via the \emph{patching method} by D.~Harbater (cf. \cite{harbater1994abhyankar}). We also refer to \cite{harbater2018abhyankar} and references therein for further details. It does lead us to wonder if there are indeed constraints on the number of primes that may ramify in Galois representations to $\op{GL}_n(\Z_p)$ with suitably large image.
\par There has been some recent interest Question \ref{question}. Let $\Q_{\{p\}}$ be the maximal algebraic extension of $\Q$ which is unramified outside $\{p, \infty\}$, and set $\op{G}_{\{p\}}:=\op{Gal}\left(\Q_{\{p\}}/\Q\right)$. Let $\bar{\chi}$ denote the mod-$p$ cyclotomic character. We shall let $\mathcal{C}$ denote the mod-$p$ class group of the cyclotomic field $\Q(\mu_p)$. Recall that $p$ is regular if $\mathcal{C}=0$ and irregular otherwise. Note that $\mathcal{C}$ is a module over $\op{Gal}\left(\Q(\mu_p)/\Q\right)$, and therefore over $\Gp$. As a Galois module, $\cC$ decomposes into eigenspaces $\mathcal{C}=\bigoplus_{i=0}^{p-2} \cC(\bar{\chi}^i)$,
where $\cC(\bar{\chi}^i):=\{\mathcal{I} \in \cC\mid g \cdot \mathcal{I}=\bar{\chi}^i(g)\mathcal{I}\}$. Note that when $i$ is even, $\cC(\bar{\chi}^i)$ arises from the mod-$p$ class group of $\Q(\mu_p)^+$, the totally real subfield of $\Q(\mu_p)$. Vandiver's conjecture predicts that the mod-$p$ class group of $\Q(\mu_p)^+$ is equal to $0$, and thus, each even eigenspace of $\mathcal{C}$ is $0$. The \emph{index of irregularity} $e(p)$ is the number of integers $i$ that are in the range $0\leq i\leq p-2$ such that $\mathcal{C}(\bar{\chi}^i)\neq 0$. Given $k>0$, let $\mathcal{U}_k$ denote the kernel of the mod-$p^k$ reduction map 
\[\op{SL}_n(\Z_p)\rightarrow \op{SL}_n(\Z/p^k\Z).\]
In \cite{ray2021constructing}, the author shows that for $n>1$, $p\geq 2^{n+2+2e(p)}$, there are infinitely many Galois representations $\rho:\op{G}_{\{p\}}\rightarrow \op{GL}_n(\Z_p)$ whose image contains $\mathcal{U}_4$. This shows that Greenberg's result can be extended to irregular primes as well, provided the prime $p$ is suitably large. This result has been extended to more general groups by S.~Maletto \cite{maletto2022galois} and S.~Tang \cite{tang2022note}. Subsequently, C.~Maire \cite{maire2021galois} shows that for all odd primes $p$, there is a Galois representation $\rho:\op{Gal}(\bar{\Q}/\Q)\rightarrow \op{GL}_n(\Z_p)$ with big image, which is unramified outside $\{2, p, \infty\}$. 
\subsection{Statement of the main result} We now state the main result, which entirely resolves Question \ref{question} for all $n>1$ and all primes $p\geq 7$. The case $n=1$ is an easy exercise. When $p\equiv 1\mod{4}$, we need to assume a weak form of Vandiver's conjecture, which we briefly explain. 
\begin{ass*}[Assumption \ref{main ass}]\label{main ass intro}
There is an odd positive integer $k$ that lies in the range $3\leq k \leq \left(\frac{p-1}{2}\right)$ such that $\mathcal{C}(\bar{\chi}^{1+ k})=\mathcal{C}(\bar{\chi}^{p-k})=0$.
\end{ass*}
The above assumption is clearly a consequence of Vandiver's conjecture, which predicts that all even eigenspaces of $\mathcal{C}$ must vanish. In fact, it is easy to see that if the number of non-zero even eigenspaces of $\mathcal{C}$ is strictly less than $\lfloor \frac{p-2}{4}\rfloor$, then the above assumption is automatically satisfied (cf. Proposition \ref{choice of k}). Also, for primes $p\equiv 3\mod{4}$ such that $p\geq 7$, the above assumption is automatically satisfied (cf. Remark \ref{remark on 3 mod 4}). 

\begin{theorem*}[Theorem \ref{main thm}]
Let $p\geq 7$ be a prime and $n>1$ be a natural number. Suppose that the above assumption holds. Then, there exist infinitely many Galois representations $\varrho:\op{Gal}(\bar{\Q}/\Q)\rightarrow \op{GL}_{n}(\Z_p)$ which are unramified outside $\{p, \infty\}$ whose image contains $\mathcal{U}_t$, where $t:=8(n^2-n)\left(3+\lfloor \op{log}_p(2^n+1)\rfloor\right)+8$.
\end{theorem*}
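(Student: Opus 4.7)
The plan is to mimic the Galois-theoretic lifting strategy of~\cite{ray2021constructing}, but with a residual representation chosen so delicately that the construction works for all $p \geq 7$ rather than only for primes large in $n$ and $e(p)$. I would begin by building a residual representation $\bar{\varrho}: \Gp \to \op{GL}_n(\F_p)$ of upper-triangular form, whose diagonal characters are prescribed powers of $\bar{\chi}$. The odd integer $k$ supplied by Assumption~\ref{main ass intro} enters at this stage: the hypothesis $\mathcal{C}(\bar{\chi}^{1+k}) = \mathcal{C}(\bar{\chi}^{p-k}) = 0$ ensures that the Selmer $H^2$-groups which govern the deformation theory of $\bar{\varrho}$ have the correct dimensions, while the constraint $3 \leq k \leq (p-1)/2$ is what makes this construction available for every $p \geq 7$.

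Next, I would produce an initial mod-$p^{s}$ lift for small $s$ (this is the origin of the additive $+8$ in the formula for $t$) and then lift step by step through $\op{GL}_n(\Z/p^{m}\Z)$ for increasing $m$. At each stage the obstruction lives in a Selmer-type subgroup of $H^2(\Gp,\op{ad}\bar{\varrho})$, and it is removed by twisting the partial lift by a cohomology class in a compatible Selmer-type $H^1$, all while keeping the ramification confined to $\{p,\infty\}$. Assumption~\ref{main ass intro} is precisely what guarantees enough free cohomology classes to carry out this bookkeeping simultaneously with the image-control analysis.

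To force the image to contain $\mathcal{U}_t$, I would show that after sufficiently many lifting steps the commutators of explicit elements in the image generate all of $\mathcal{U}_t$ via a Lie-algebra closure argument in $\op{sl}_n(\Z_p)$. The exponent $t = 8(n^2-n)\bigl(3 + \lfloor \op{log}_p(2^n+1)\rfloor\bigr) + 8$ then records the cost of this process: $n^2 - n$ is the dimension of the off-diagonal part of the adjoint representation, $\lfloor \op{log}_p(2^n+1)\rfloor$ measures the number of iterations needed to produce a spanning system of commutators over $\Z_p$, and the factors of $8$ absorb losses incurred when passing through the various Selmer twists. Infinitely many such representations are then produced by varying the cohomology class chosen at any single lifting stage.

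The hardest step will be the first one: producing a residual representation whose extension-class structure is rich enough to force $\mathcal{U}_t$ in the image after lifting, yet whose ramification remains confined to $\{p,\infty\}$. Without the earlier papers' large-$p$ or strong-regularity hypotheses, the only leverage on the relevant $H^1$ and $H^2$ groups comes from Assumption~\ref{main ass intro}; verifying that this weak hypothesis genuinely suffices, and then tracking the losses carefully enough to land at the explicit exponent $t$, is where the technical difficulty is concentrated.
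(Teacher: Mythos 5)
Your broad architecture — prescribe a residual representation with cyclotomic-power diagonal, invoke Assumption~\ref{main ass} to control the relevant $H^2$, lift with twisting, force a large image by a commutator/Lie-bracket argument, and vary a cohomology class to get infinitely many — matches the paper's outline. But several load-bearing specifics are misidentified and would lead the argument off course. First, the paper's residual representation $\bar{\rho}$ is strictly \emph{diagonal}, $\operatorname{diag}(\alpha_1,\dots,\alpha_n)$ with $\alpha_i \in \{1,\nu\}$, not upper-triangular with nontrivial extension classes. Diagonality is essential: it makes $\op{Ad}\bar{\rho}$ split as the torus plus a direct sum of one-dimensional characters $\F_p(\alpha_{i,j})$ with $\alpha_{i,j} \in \{1,\nu,\nu^{-1}\}$, which is exactly what lets Proposition~\ref{prop unobstructed} verify $\Sh^2_{\{p\}}(\op{Ad}\bar{\rho})=0$ and $H^2(\op{G}_p,\op{Ad}\bar{\rho})=0$ one character at a time from Assumption~\ref{main ass}. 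Your idea of making the residual representation ``rich enough in extension classes to force $\mathcal{U}_t$'' inverts the paper's logic: all the richness enters much later, at level $N=2M$, through the twist $\rho_N' = (\I_n + p^M f)\rho_N$ by a deliberately chosen class $f=\sum f_{i,j}$.

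Second, because $H^2(\Gp,\op{Ad}\bar{\rho})=0$, the deformation problem is unobstructed from the start; there is no step-by-step obstruction-removal bookkeeping at all, and the twist in the paper is done once, for \emph{image control} (forcing $\rho_N'|_{\op{Gal}(\Q_{\{p\}}/L)}$ to hit generators of $\mathcal{A}\simeq\op{Ad}\rho_M$ componentwise, via Lemmas~\ref{lemma 3.3} and~\ref{lemma 5.2}), not to kill obstructions. Third, your accounting for the exponent $t$ is wrong in every term: the $\lfloor\log_p(2^n+1)\rfloor$ comes from the admissibility bound in Lemma~\ref{choice of m} (choosing exponents $k_i=2^i p+\epsilon_i$ so the $\beta_i\beta_j^{-1}$ stay distinct mod $p^m$ forces $m > 2+\log_p(2^n+1)$); the factor $n^2-n$ comes from Lemma~\ref{lemma 3.3}, which loses at most $m(n^2-n)$ powers of $p$ when projecting an element of $\mathcal{M}$ onto a single $e_{i,j}$-line; the overall factor $8=4\cdot 2$ is $4$ from the commutator-closure step in Proposition~\ref{main prop} ($\mathcal{U}_{4N}$) times $N=2M$; and the additive $+8$ is $8\cdot 1$ from the $+1$ in $M=m(n^2-n)+1$, which is needed so the Lemma~\ref{lemma 3.3} projection does not annihilate the element entirely. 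It has nothing to do with an initial mod-$p^s$ lift or ``Selmer twist losses.'' As written, your proposal would not reproduce the theorem's bound and, if pursued with upper-triangular residual data, would forfeit the clean characterwise vanishing that makes the whole argument unconditional beyond Assumption~\ref{main ass}.
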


\subsection{Method of proof} Let us briefly summarize the construction. We begin with a suitable choice of a residual represenation \[\bar{\rho}:\Gp\rightarrow \op{GL}_n(\Z/p\Z)\] which is carefully lifted to a characteristic zero representation via a step by step process. This residual representation is a \emph{diagonal representation}, i.e., a direct sum of $1$-dimensional characters, defined in section \ref{s 2.2}. Set $M:=(n^2-n)\left(3+\lfloor \op{log}_p(2^n+1)\rfloor\right)+1$ and $N:=2M$. A diagonal representation 
\[\rho_\beta=\left( {\begin{array}{ccccc}
   \beta_1 & & & & \\
    & \beta_2& & & \\
    & & \ddots & & \\
    & & & \beta_{n-1} & \\
    & & & & \beta_{n}
  \end{array} } \right):\Gp\rightarrow \op{GL}_n(\Z/p^N\Z),\] that lifts $\bar{\rho}$ is carefully chosen. We set $\rho_N:=\rho_\beta$ and set $\rho_M$ to denote the mod-$p^M$ reduction of $\rho_\beta$. Any other lift $\rho_N':\Gp\rightarrow \op{GL}_n(\Z/p^N\Z)$ of $\rho_M$ may be constructed by \emph{twisting} $\rho_N$ by a suitable cohomology class taking values in the adjoint representation $\op{Ad}\rho_M$ (cf. the discussion following Definition \ref{def 3.1}). We carefully construct a suitable cohomology class $f\in H^1(\Gp, \op{Ad}\rho_M)$ and then set $\rho_N'$ to denote the twist of $\rho_N$ by $f$ (cf. \eqref{def of rho N prime}). This twist has many advantageous properties. Let $L:=\Q(\rho_M)$ be the finite Galois extension of $\Q$ which is fixed by the kernel of $\rho_M$. One of the key properties exhibited by $\rho_N'$ is that $\rho_N'\left(\op{Gal}(\Q_{\{p\}}/L)\right)$ has suitably large image in 
  \[\mathcal{A}:=\op{ker}\left(\op{GL}_n(\Z/p^N\Z)\rightarrow \op{GL}_n(\Z/p^M\Z)\right).\] It is easy to see that $\mathcal{A}$ is an abelian group and can be identified with $n\times n$-matrices with coefficients in $\Z/p^M\Z$. By virtue of being an abelian group, $\mathcal{A}$ acquires a natural Galois action and is isomorphic to the adjoint representation $\op{Ad}\rho_M$. Of crucial importance is that this adjoint representation decomposes into a direct sum of a trivial module arising from the diagonal torus, and a direct sum of distinct cyclic Galois-modules. It should be noted that the residual adjoint representation $\op{Ad}\bar{\rho}$ does not necessarily have this favorable property. The infinitesimal deformation problem associated to $\bar{\rho}$ is unobstructed, i.e., $H^2(\Gp, \op{Ad}\bar{\rho})=0$ (cf. Proposition \ref{prop unobstructed}). As a result, it follows (cf. Proposition \ref{prop 4.1}) that $\rho_N'$ exhibits infinitely many lifts to characteristic zero. The twist $\rho_N'$ satisfies a number of key properties, and Proposition \ref{main prop} implies that the lifts of $\rho_N'$ all have image containing a large open subgroup of $\op{SL}_n(\Z_p)$.
\subsection{Organization} Including the introduction, the manuscript consists of five sections. In section \ref{s 2}, we introduce notation and prove a number of preliminary results. In this section, we recall the main assumption when $p\equiv 1\mod{4}$ (i.e., Assumption \ref{main ass}) and explain why it is significantly weaker than Vandiver's conjecture. We then introduce the choice of residual representation $\bar{\rho}$, and show that it is unobstructed (cf. Proposition \ref{prop unobstructed}). Also in this section, we discuss properties of diagonal representations and introduce the notion of \emph{admissibility}. In section \ref{s 3}, we systematically introduce the deformation theory of Galois representations, and discuss how deformations of a given Galois representation may be \emph{twisted} by cohomology classes valued in the adjoint module. In section \ref{s 4}, we prove results that allow us to state conditions for the image of a Galois representation to be suitably large in $\op{GL}_n(\Z_p)$. The main criterion proven in this section is Proposition \ref{main prop}, which is crucially used in the proof of the main theorem. Finally, in section \ref{s 5}, we give a proof of the main result of the manuscript.

  \subsection{Acknowledgment} The author is supported by the CRM-Simons postdoctoral fellowship. He thanks the anonymous referee for the helpful report.
\section{Preliminaries}\label{s 2}
\par This section is preliminary in nature. We introduce some key notation that will aid our constructions in later sections, and prove a few basic yet crucial results. Throughout $p$ will be a fixed prime number and $n>1$ an integer. We assume throughout that $p\geq 7$. Set $\Q_p$ to denote the field of $p$-adic numbers and $\Z_p:=\varprojlim_m \Z/p^i\Z$ the ring of $p$-adic integers. Let $\bar{\Q}$ (resp. $\bar{\Q}_p$) denote the algebraic closure of $\Q$ (resp. $\Q_p$). We let $\F_p$ denote the field with $p$ elements, i.e., $\F_p:=\Z/p\Z$. At each prime $\ell$, let $\iota_\ell: \bar{\Q}\hookrightarrow \bar{\Q}_\ell$ be a chosen embedding, and set $\op{G}_\ell$ to denote the absolute Galois group $\op{Gal}(\bar{\Q}_\ell/\Q_\ell)$. The choice of $\iota_\ell$ corresponds to a choice of a prime that lies above $\ell$, and an inclusion of $\op{G}_\ell$ into the absolute Galois group $\op{G}_{\Q}:=\op{Gal}(\bar{\Q}/\Q)$. Let $\op{I}_\ell$ be the inertia subgroup of $\op{G}_\ell$. Given a unital commutative ring $R$, let $M_n(R)$ consist of all $n\times n$ matrices with entries in $R$. Let $e_{i,j}(R)$ denote the matrix in $M_n(R)$ with $0$s in all positions except at $(i,j)$, where the entry is equal to $1$. When there is no cause for confusion, we simply write $e_{i,j}$ in place of $e_{i,j}(R)$.

\par It benefits us to consider quotients of the absolute Galois group $\op{G}_{\Q}$ in which all primes outside a prescribed finite set are unramified. Given a finite set of prime numbers $S$, denote by $\Q_S$ the maximal extension of $\Q$ which is contained in $\bar{\Q}$ in which all primes $\ell\notin S\cup \{\infty\}$ are unramified. Note that the extension $\Q_S$ is necessarily Galois over $\Q$. Set $\op{G}_S$ to denote the Galois group $\op{Gal}(\Q_S/\Q)$. Given a Galois representation $\rho: \op{G}_S\rightarrow \op{GL}_n(\cdot)$, let $\rho_{|\ell}$ denote the restriction of $\rho$ to $\op{G}_\ell$. We say that $\rho$ is \emph{unramified at $\ell$} if $\rho_{|\ell}$ is trivial when restricted to $\op{I}_\ell$. For each integer $m\geq 1$ let $\mu_{p^m}$ denote the $p^m$-th roots of unity. Let $\chi:\op{G}_{\{p\}}\rightarrow \Z_p^\times$ be the $p$-adic cyclotomic character and denote by $\bar{\chi}$ (resp. $\chi_m$) the mod-$p$ (resp. mod-$p^m$) reduction of $\chi$. By abuse of notation, when it is clear from the context, we shall simply write $\chi$ instead of $\chi_m$. 

\begin{proposition}\label{choice of k} Suppose that the number of non-zero even eigenspaces of $\mathcal{C}$ is strictly less than $\lfloor \frac{p-2}{4}\rfloor$. Then, there is an odd positive integer $k$ that lies in the range $3\leq k \leq \left(\frac{p-1}{2}\right)$ such that $\mathcal{C}(\bar{\chi}^{1+ k})=\mathcal{C}(\bar{\chi}^{p-k})=0$.
\end{proposition}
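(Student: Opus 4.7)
The plan is to let $S$ denote the set of odd integers $k$ with $3 \le k \le (p-1)/2$, and to show by contradiction that if no $k \in S$ satisfies the desired vanishing, then the number of nonzero even eigenspaces of $\mathcal{C}$ must be at least $|S|$, where a direct count gives $|S| = \lfloor (p-2)/4 \rfloor$.

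First I would verify the count of $|S|$. If $p \equiv 1 \bmod 4$ write $p = 4m+1$; then $(p-1)/2 = 2m$ is even, so the odd integers in $[3, 2m]$ are $3, 5, \dots, 2m-1$, giving $|S| = m - 1 = \lfloor (p-2)/4 \rfloor$. If $p \equiv 3 \bmod 4$ write $p = 4m+3$; then $(p-1)/2 = 2m+1$ is odd, so the odd integers in $[3, 2m+1]$ are $3, 5, \dots, 2m+1$, giving $|S| = m = \lfloor (p-2)/4 \rfloor$. Next I would observe that for every $k \in S$, both $1+k$ and $p-k$ are even (since $p$ and $k$ are odd), so $\mathcal{C}(\bar{\chi}^{1+k})$ and $\mathcal{C}(\bar{\chi}^{p-k})$ are even eigenspaces of $\mathcal{C}$.

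Now assume for contradiction that for every $k \in S$, at least one of $\mathcal{C}(\bar{\chi}^{1+k})$ or $\mathcal{C}(\bar{\chi}^{p-k})$ is nonzero, and pick such a nonzero eigenspace, giving a function
\[
\phi \colon S \longrightarrow \{\,i \in \{0, 1, \dots, p-2\} \text{ even} : \mathcal{C}(\bar{\chi}^i) \neq 0 \,\},
\]
by sending $k$ to the chosen index $1+k$ or $p-k$. The key step is to show $\phi$ is injective. Suppose $k_1, k_2 \in S$ are distinct but $\phi(k_1) = \phi(k_2)$. Since $k \mapsto 1+k$ and $k \mapsto p-k$ are both injective on $S$, the only remaining possibility is a crossed equality such as $1+k_1 = p - k_2$, which forces $k_1 + k_2 = p-1$. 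Because $k_1, k_2 \le (p-1)/2$, this equality only holds when $k_1 = k_2 = (p-1)/2$, contradicting $k_1 \neq k_2$. Hence $\phi$ is injective.

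From injectivity the number of nonzero even eigenspaces of $\mathcal{C}$ is at least $|S| = \lfloor (p-2)/4 \rfloor$, contradicting the hypothesis. Hence some $k \in S$ must satisfy $\mathcal{C}(\bar{\chi}^{1+k}) = \mathcal{C}(\bar{\chi}^{p-k}) = 0$, as required. The only delicate point is the injectivity of $\phi$, which really hinges on the range constraint $k \le (p-1)/2$ that prevents $1 + k_1$ from accidentally landing on $p - k_2$ for distinct $k_i$; everything else is a bookkeeping exercise in counting odd residues mod $4$.
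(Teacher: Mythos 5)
Your proof is correct and follows essentially the same contradiction-and-counting strategy as the paper, but you supply the crucial injectivity argument for the map $k \mapsto \phi(k)$ (ruling out the ``crossed'' collision $1+k_1 = p-k_2$ via the range constraint $k \le (p-1)/2$), which the paper's proof glosses over when it passes directly from the exhaustion of the $k$'s to the count of nonzero even eigenspaces. This is a welcome addition of detail rather than a genuinely different route; the count $|S| = \lfloor (p-2)/4\rfloor$ in the two residue classes mod $4$ matches the paper's computation exactly.
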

\begin{proof}
Suppose that by way of contradiction that for each odd $k$ in the range $3\leq k \leq \left(\frac{p-1}{2}\right)$, either $\mathcal{C}(\bar{\chi}^{1+ k})$ or $\mathcal{C}(\bar{\chi}^{p-k})$ is non-zero. First, suppose that $p\equiv 1\mod{4}$, i.e., $p=4s+1$. Then, the number of odd values of $k$ in the range $3\leq k \leq \left(\frac{p-1}{2}\right)=2s$ is $(s-1)=\lfloor \frac{p-2}{4}\rfloor$. On the other hand, suppose that $p\equiv 3\mod{4}$, i.e., $p=4s+3$. Then, the number of odd values of $k$ is $s=\lfloor \frac{p-2}{4}\rfloor$. In both cases, the number of even eigenspaces must be $\geq \lfloor \frac{p-2}{4}\rfloor$, a contradiction. Therefore, there must exist $k$ as above such that $\mathcal{C}(\bar{\chi}^{1+ k})=\mathcal{C}(\bar{\chi}^{p-k})=0$.
\end{proof}

We make the above into an assumption, which is clearly significantly weaker than Vandiver's conjecture.

\begin{ass}\label{main ass}
There is an odd positive integer $k$ that lies in the range $3\leq k \leq \left(\frac{p-1}{2}\right)$ such that $\mathcal{C}(\bar{\chi}^{1+ k})=\mathcal{C}(\bar{\chi}^{p-k})=0$.
\end{ass}

\begin{remark}\label{remark on 3 mod 4}
When $p\equiv 3\mod{4}$ and $k=\left(\frac{p-1}{2}\right)$, then, the above assumption stipulates that $\mathcal{C}(\bar{\chi}^{\frac{p+1}{2}})=0$. This result is proven by Osburn (see \cite{osburn}). Therefore, the above is an assumption only for $p\equiv 1\mod{4}$.
\end{remark} 
We fix an integer $k$ in the range $3\leq k \leq \left(\frac{p-1}{2}\right)$ such that the above assumption holds and let $\nu$ denote the character $\bar{\chi}^k$. 

\subsection{Diagonal Galois representations}\label{s 2.2}
Our construction is based on the study of suitable lifts of diagonal Galois representations, i.e., Galois representations that are expressed as a direct sum of $1$-dimensional characters. Throughout, we shall fix a suitable residual representation. Let $\bar{\rho} :\Gp\rightarrow \op{GL}_n(\F_p)$ be the diagonal Galois representation given by \[\bar{\rho}=\bar{\rho}^{(n)}=\left( {\begin{array}{ccccc}
   \alpha_1 & & & & \\
    & \alpha_2& & & \\
    & & \ddots & & \\
    & & & \alpha_{n-1} & \\
    & & & & \alpha_{n}
  \end{array} } \right),\] where
  \begin{equation}\label{defn of alpha}
  \alpha_i=\begin{cases}& 1\text{ if }i \text{ is odd,}\\
& \nu \text{ if }i \text{ is even.}\\
\end{cases}\end{equation}

Thus the representations we consider for $n=2, 3, 4,\dots$ are as follows
\[\bar{\rho}^{(2)}=\mtx{1}{}{}{\nu}, \bar{\rho}^{(3)}=\left( {\begin{array}{ccc}
   1 & &  \\
    & \nu &  \\
    & & 1
  \end{array} } \right) \text{ and }\bar{\rho}^{(4)}=\left( {\begin{array}{cccc}
   1 & & & \\
    & \nu & &  \\
    & & 1 &  \\
    & & & \nu 
  \end{array} } \right), \dots, \text{etc.} \]
Henceforth, we suppress the dependence of $n$ in our notation, and simply denote the residual representation by $\bar{\rho}$.
We set $\alpha$ to denote the tuple $(\alpha_1, \dots, \alpha_n)$, and given an integer $N>0$, let $\beta=(\beta_1, \dots, \beta_n)$ be a tuple of characters $\beta_i:\Gp\rightarrow \left(\Z/p^N \Z\right)^\times$. We say that $\beta_i$ \emph{lifts} $\alpha_i$ if $\alpha_i=\beta_i\mod{p}$, and that $\beta$ lifts $\alpha$ if $\beta_i$ lifts $\alpha_i$ for all $i=1, \dots, n$. In order to emphasize the dependence on $N$, we say that that $\beta$ is an \emph{$N$-lift} of $\alpha$. 
\begin{definition}\label{admissible defn}
Let $m$ and $N$ be positive integers such that $m\leq N$. Let $\beta=(\beta_1, \dots, \beta_n)$ be an $N$-lift of $\alpha$. We say that the tuple $\beta$ is $m$-\emph{admissible} if the following conditions are satisfied.
\begin{enumerate}
\item\label{c1 of admissible defn} For $i=1, \dots, n$, the character $\beta_i$ is of the form $\chi^{k_i}$ for some integer $k_i\geq 1$.
\item\label{c2 of admissible defn} The characters $\beta_i$ are all distinct modulo $p^m$. 
\item\label{c3 of admissible defn} As $(i,j)$ ranges over all pairs such that $i\neq j$, the characters $\beta_{i,j}:=\beta_i\beta_j^{-1}$ are all distinct modulo $p^m$.
\end{enumerate}
\end{definition}
Thus condition \eqref{c2 of admissible defn} implies that $\beta_{i,j}\neq 1$ modulo $p^m$ whenever $i\neq j$.
\begin{remark}
We observe the following.
\begin{enumerate}
\item Since $\beta$ lifts $\alpha$, we find that $k_i\equiv \left(\frac{1+(-1)^i}{2}\right) k\mod{(p-1)}$ for all $i$.
\item Note that \eqref{c2 of admissible defn} is equivalent to the condition that $k_i\not \equiv k_j \mod{p^{m-1}(p-1)}$ for $i\neq j$. By the above remark, this condition is automatically satisfied if $i\not \equiv j\mod{2}$.
\item The condition \eqref{c3 of admissible defn} is equivalent to the condition that $k_i-k_j\not\equiv k_s-k_t\mod{p^{m-1}(p-1)}$ for all distinct pairs $(i,j)$ and $(s,t)$ such that $i\neq j$ and $s\neq t$.
\end{enumerate}
\end{remark}

\par Given an $N$-lift $\beta$ of $\alpha$, we consider the associated Galois representation 
\[\rho_\beta=\left( {\begin{array}{ccccc}
   \beta_1 & & & & \\
    & \beta_2& & & \\
    & & \ddots & & \\
    & & & \beta_{n-1} & \\
    & & & & \beta_{n}
  \end{array} } \right):\Gp\rightarrow \op{GL}_n(\Z/p^N\Z).\] Note that since $\beta$ is a lift of $\alpha$, it follows that $\rho_\beta$ is a lift of $\bar{\rho}$. We say that $\rho_\beta$ is $m$-admissible if $\beta$ is $m$-admissible.
  \par The following result shows that for any integer $n>1$, an $m$-adimissible lift of $\alpha$ does exist for an explicit choice of $m$.

  \begin{lemma}\label{choice of m}
  Let $m$ be an integer such that $m>2+\op{log}_p(2^n+1)$, and let $N\geq m$. For an integer $i$ in the range $1\leq i\leq n$, let $\epsilon_i:=\frac{1+(-1)^i}{2}k$, $k_i:=2^i p + \epsilon_i$, and set $\beta_i:=\chi^{k_i}\mod{p^N}$. Then, $\beta=(\beta_1, \dots, \beta_n)$ is an $m$-admissible $N$-lift of $\alpha$. 
  \end{lemma}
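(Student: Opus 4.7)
I would verify, in sequence, the lift property $\beta \bmod p = \alpha$ and each of the three numbered conditions of Definition~\ref{admissible defn}. The integers $k_i = 2^i p + \epsilon_i$ are manifestly positive, so condition~(1) is immediate and each $\beta_i = \chi^{k_i} \bmod p^N$ is well-defined; the lift property reduces to computing $k_i \bmod (p-1)$ and matching against the parity-dependent definition of $\alpha_i$, using that $\chi \bmod p = \bar\chi$ and $\alpha_i \in \{1, \bar\chi^k\}$.

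The key observation for conditions (2) and (3) is that the image of $\chi$ inside $(\Z/p^m\Z)^\times$ is the full unit group, cyclic of order $p^{m-1}(p-1)$. Hence $\beta_i \equiv \beta_j \pmod{p^m}$ iff $p^{m-1}(p-1) \mid k_i - k_j$, and $\beta_{i,j} \equiv \beta_{s,t} \pmod{p^m}$ iff $p^{m-1}(p-1) \mid (k_i - k_j) - (k_s - k_t)$. My strategy in both cases will be to bound the relevant integer strictly in absolute value by $p^{m-1}(p-1)$ and to rule out its vanishing, so that the congruence forces equality of integers and thus a contradiction.

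For condition~(2): if $i, j$ have opposite parity then $\alpha_i \neq \alpha_j$ and the characters already disagree modulo $p$. If they share a parity then $\epsilon_i = \epsilon_j$ and $k_i - k_j = p(2^i - 2^j)$ is a nonzero integer of absolute value at most $p\cdot 2^n < p^{m-1}(p-1)$, the last inequality using the hypothesis $p^{m-2} > 2^n + 1$ together with $p \geq 7$. For condition~(3), set
\[\Delta := (k_i - k_j) - (k_s - k_t) = p(2^i - 2^j - 2^s + 2^t) + (\epsilon_i - \epsilon_j - \epsilon_s + \epsilon_t).\]
A routine estimate yields $|\Delta| \leq 2^{n+1} p + 2k \leq p(2^{n+1} + 1) < p^{m-1}(p-1)$, again from the hypothesis and $p\geq 7$. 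If $\Delta = 0$, reducing modulo $p$ shows that $\epsilon_i - \epsilon_j - \epsilon_s + \epsilon_t$ is divisible by $p$; but this quantity lies in $[-2k, 2k] \subseteq [-(p-1), p-1]$, so it vanishes, forcing $2^i - 2^j = 2^s - 2^t$. A $2$-adic valuation argument on the (nonzero) common value --- the valuation equals the smaller of the two exponents on each side, the remaining factor being odd --- then forces $(i, j) = (s, t)$, contradicting the assumption $(i,j) \neq (s,t)$.

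The one step that is not pure bookkeeping is the final $2$-adic uniqueness argument showing that $(i, j) \mapsto 2^i - 2^j$ is injective on pairs with $i \neq j$; everything else amounts to calibrating the elementary size estimates to the hypothesis $m > 2 + \log_p(2^n + 1)$.
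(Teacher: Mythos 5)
Your plan is correct and follows essentially the same route as the paper's own proof: one bounds the integer differences $k_i-k_j$ and $(k_i-k_j)-(k_s-k_t)$ so that congruence modulo $p^{m-1}(p-1)$ forces honest integer equality, separates the $p$-divisible part from the small $\epsilon$-remainder by reducing modulo $p$, and then exploits the structure of signed sums of powers of $2$. The only cosmetic difference is your last step: you establish injectivity of $(i,j)\mapsto 2^i-2^j$ via a $2$-adic valuation argument, whereas the paper reorders so that $i$ is the largest index and derives $2^i=2^j+2^s-2^t<2^i$; either works.

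One point worth flagging, which the paper's own proof also silently skips: the reduction you defer in your first paragraph --- checking $k_i\bmod(p-1)$ against the parity-dependent exponent of $\alpha_i$ --- does not close with $k_i:=2^ip+\epsilon_i$ as written. Since $p\equiv 1\pmod{p-1}$, one has $k_i\equiv 2^i+\epsilon_i\pmod{p-1}$, so $\beta_i\bmod p=\bar{\chi}^{2^i+\epsilon_i}$, which differs from $\alpha_i=\bar{\chi}^{\epsilon_i}$ unless $p-1\mid 2^i$ (already false for $p=7$, $i=1$). The paper's proof verifies only conditions (2) and (3) of Definition~\ref{admissible defn} and never checks the $N$-lift precondition, so this looks like a slip in the statement of the lemma; a choice such as $k_i:=2^i p(p-1)+\epsilon_i$ would restore $k_i\equiv\epsilon_i\pmod{p-1}$ and the bound arguments (yours and the paper's) go through after routine adjustments. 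If you carry out your outlined computation literally, be prepared for the lift check to fail with the $k_i$ as stated.
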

  
\begin{proof}
Note that since $p^{m-1}>p(2^n+1)$, it follows that $\frac{1}{2}p^{m-1}(p-1)>k_n=p2^n+\epsilon_n$. Since $k_i\neq k_j$, it follows that $k_i\not \equiv k_j\mod{p^{m-1}(p-1)}$. On the other hand, suppose there are two distinct pairs $(i,j)$ and $(s,t)$ such that $i\neq j$, $s\neq t$, and \[k_i-k_j\equiv k_s-k_t\mod{p^{m-1}(p-1)}.\] Then, since 
\[\op{max}\{|k_i-k_j|, |k_s-k_t|\}<\frac{1}{2}p^{m-1}(p-1),\]we deduce that $k_i-k_j=k_s-k_t$. Without loss of generality, assume that $i>j,s, t$. Then, we find that $p(2^i-2^j)=p(2^s-2^t)+b$, where $|b|
<p$. Then, we find that $2^i-2^j=2^s-2^t$ and $b=0$. But then, $2^i=2^j+2^s-2^t<2\cdot 2^{i-1}=2^i$, a contradiction. Hence, we have proven that $\beta$ is $m$-admissible.
\end{proof}

\subsection{The adjoint representation} Let $\g=M_n(\F_p)$ denote the Lie algebra of $\op{GL}_n(\F_p)$, consisting of $n\times n$ matrices with entries in $\F_p=\Z/p\Z$. Denote by $\g_0$ the subspace of $\g$ consisting of matrices with trace equal to $0$. We give $\g$ the structure of a Galois module as follows. Let $v\in \g$ and $g\in \Gp$. Then, $g\cdot v$ is defined to be $\bar{\rho}(g) v \bar{\rho}(g)^{-1}$. The action is referred to as the adjoint action. When referring to the Galois module, we simply use $\op{Ad}\bar{\rho}$, and when referring to the underlying space, we use $\g$. Note that $\g_0$ which consists of trace zero matrices is a submodule, which we also denote by $\op{Ad}^0\bar{\rho}$. Set $\alpha_{i,j}$ to denote $\alpha_i \alpha_j^{-1}$. Recall that $\alpha_i=\bar{\chi}^{\left(\frac{1+(-1)^i}{2}\right)k}$, and therefore, we find that 
\[\alpha_{i,j}=\begin{cases}
& 1 \text{ if }i\equiv j\mod{2},\\
& \nu=\bar{\chi}^k \text{ if i is even and j is odd},\\
& \nu^{-1}=\bar{\chi}^{-k}\text{ if i is odd and j is even.}
\end{cases}\]
Let $\mathfrak{t}$ (resp. $\mathfrak{t}_0$) denote the submodule of $\op{Ad}\bar{\rho}$ (resp. $\op{Ad}^0\bar{\rho}$) consisting of diagonal matrices. Note that the Galois action on $\mathfrak{t}$ is trivial and we have the following decompositions
\[\begin{split} & \op{Ad}\bar{\rho}=\mathfrak{t}\oplus \left(\bigoplus_{i\neq j} \F_p(\alpha_{i,j})\right), \\
& \op{Ad}^0\bar{\rho}=\mathfrak{t}_0\oplus \left(\bigoplus_{i\neq j} \F_p(\alpha_{i,j})\right). 
    \end{split}\]
Given a module $M$ over $\op{G}_S$ and an integer $i\geq 0$, we denote by $H^i(\op{G}_S, M)$ the $i$-th cohomology group. Denote by $\Sh^i_S(M)$ the kernel of the restriction maps
\[\Sh^i_S(M):=\op{ker}\left\{H^i(\op{G}_S, M)\rightarrow \bigoplus_{\ell\in S} H^i(\op{G}_\ell, M)\right\}. \]
\par Given a representation $\bar{\rho}:\op{G}_S\rightarrow \op{GL}_n(\F_p)$, we say that $\bar{\rho}$ is \emph{$S$-unobstructed} if the cohomology group $H^2(\op{G}_S, \op{Ad}\bar{\rho})$ vanishes. When $\bar{\rho}$ is unramified at all primes $\ell\neq p$, we shall say that $\bar{\rho}$ is unobstructed to mean that it is $\{p\}$-unobstructed. When $\bar{\rho}$ is the residual representation $\op{diag}\left(\alpha_1, \dots, \alpha_n\right)$, the representation $\bar{\rho}$ is always unobstructed, as the following result shows.

\begin{proposition}\label{prop unobstructed}
Let $n>1$ be an integer, and let $\bar{\rho}$ be the Galois representation $\op{diag}\left(\alpha_1, \dots, \alpha_n\right)$, where $\alpha_i$ is given by \eqref{defn of alpha}. Then, we find that $H^2(\Gp, \op{Ad}\bar{\rho})=0$.
\end{proposition}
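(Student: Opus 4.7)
The approach is to exploit the decomposition of $\op{Ad}\bar{\rho}$ as a $\Gp$-module,
$$\op{Ad}\bar{\rho} = \mathfrak{t} \oplus \bigoplus_{i \neq j} \F_p(\alpha_{i,j}),$$
recorded immediately above the proposition. Since $\alpha_{i,j}$ takes only the values $1$, $\nu$, and $\nu^{-1}$, and $\mathfrak{t}$ is a direct sum of copies of the trivial module, additivity of $H^2(\Gp,\,\cdot\,)$ in the coefficients reduces the problem to the three vanishing statements
$$H^2(\Gp, \F_p) = 0, \quad H^2(\Gp, \F_p(\nu)) = 0, \quad H^2(\Gp, \F_p(\nu^{-1})) = 0.$$

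For the trivial coefficient case, I would invoke Tate's global Euler characteristic formula
$$\frac{|H^0(\Gp, \F_p)| \cdot |H^2(\Gp, \F_p)|}{|H^1(\Gp, \F_p)|} = \frac{|H^0(G_\R, \F_p)|}{|\F_p|} = 1,$$
combined with the identification $\Gp^{\op{ab}} \cong \Z_p^\times$. The latter follows from Kronecker--Weber, since the maximal abelian pro-$p$ extension of $\Q$ unramified outside $\{p,\infty\}$ is the cyclotomic $\Z_p$-tower $\Q(\mu_{p^\infty})$. This yields $\dim H^1(\Gp, \F_p) = 1$, and together with $\dim H^0 = 1$ it forces $H^2(\Gp, \F_p) = 0$.

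For the twisted cases $M = \F_p(\eta)$ with $\eta \in \{\nu, \nu^{-1}\}$, the plan is to apply global Poitou--Tate duality. The Cartier dual is $M^* = \F_p(\bar{\chi}\eta^{-1})$, and since $3 \leq k \leq (p-1)/2$, neither $\eta$ nor $\bar{\chi}\eta^{-1}$ is trivial. Because $p$ is totally ramified in $\Q(\mu_p)$, the decomposition group $\op{G}_p$ surjects onto $\Delta := \op{Gal}(\Q(\mu_p)/\Q)$; hence local duality gives $H^2(\op{G}_p, M) \cong H^0(\op{G}_p, M^*)^\vee = 0$, and the archimedean local $H^2$ vanishes as $p$ is odd. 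The nine-term Poitou--Tate sequence then furnishes
$$H^2(\Gp, \F_p(\eta)) \;\cong\; \Sh^1_{\{p\}}(\F_p(\bar{\chi}\eta^{-1}))^{\vee}.$$
To interpret the right-hand side, set $E = \Q(\mu_p)$ and $\cG = \op{Gal}(\Q_{\{p\}}/E)$. Since $|\Delta|$ is prime to $p$, Hochschild--Serre collapses and identifies $H^1(\Gp, \F_p(\bar{\chi}\eta^{-1}))$ with an eigenspace of $H^1(\cG, \F_p) = (\cG^{\op{ab}}/p)^{\vee}$. The local triviality condition at $p$ cuts this down to homomorphisms that factor through the class group; here one uses crucially that the unique prime of $E$ above $p$ is principal (generated by $1 - \zeta_p$), so that unramified-at-$p$ coincides with split-at-$p$ for unramified extensions. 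Class field theory then identifies the eigenspace with $\cC(\bar{\chi}\eta^{-1})^{\vee}$, whence
$$\dim_{\F_p} H^2(\Gp, \F_p(\eta)) = \dim_{\F_p} \cC(\bar{\chi}\eta^{-1}).$$
For $\eta = \nu$ this is $\cC(\bar{\chi}^{1-k}) = \cC(\bar{\chi}^{p-k})$ (using $\bar{\chi}^{p-1} = 1$), and for $\eta = \nu^{-1}$ it is $\cC(\bar{\chi}^{1+k})$; both vanish by Assumption~\ref{main ass}.

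The main technical obstacle will be the last paragraph: tracking the Cartier-duality twist, the Hochschild--Serre eigenspace shift, and the $\Delta$-contragredient arising from $\Hom$ into $\F_p$, to verify that the resulting class-group eigenspace is precisely $\cC(\bar{\chi}^{1 \pm k})$ and not an off-by-one variant. The assumption has plainly been engineered for this matching, but the verification should be carried out explicitly rather than assumed by symmetry.
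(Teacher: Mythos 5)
Your proposal follows essentially the same route as the paper: decompose $\op{Ad}\bar{\rho}$ into the torus and the eigenlines $\F_p(\alpha_{i,j})$ with $\alpha_{i,j}\in\{1,\nu,\nu^{-1}\}$, kill the local $H^2$ at $p$ by Tate local duality (since the cyclotomic twist of each $\alpha_{i,j}$ is nontrivial on $\op{G}_p$), and reduce the remaining global vanishing, via Poitou--Tate duality, to the class-group eigenspace conditions $\cC(\bar{\chi})=\cC(\bar{\chi}^{1+k})=\cC(\bar{\chi}^{p-k})=0$, the last two being exactly Assumption~\ref{main ass}. The only variations are minor: you treat the trivial eigenline separately via the global Euler characteristic formula together with Kronecker--Weber (where the paper folds it into the same duality argument and invokes the classical vanishing of $\cC(\bar{\chi})$), and you sketch the class-field-theoretic identification of $\Sh^1_{\{p\}}(\F_p(\tau))$ with a subspace of $\cC(\tau)^\vee$ using that $(1-\zeta_p)$ is principal (where the paper cites \cite[Lemma 3.2]{ray2021constructing}); the eigenspace bookkeeping you flag as the main technical obstacle does work out with no off-by-one shift, and that verification is precisely the content of the cited lemma.
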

\begin{proof}
Consider the exact sequence 
\[0\rightarrow \Sh_{\{p\}}^2(\op{Ad}\bar{\rho})\rightarrow H^2(\Gp, \op{Ad}\bar{\rho})\rightarrow H^2(\op{G}_p, \op{Ad}\bar{\rho}).\]It suffices to show that $\Sh_{\{p\}}^2(\op{Ad}\bar{\rho})=0$ and that $H^2(\op{G}_p, \op{Ad}\bar{\rho})=0$. Note that the Galois module $\op{Ad}\bar{\rho}$ decomposes as follows
\[\op{Ad}\bar{\rho}=\mathfrak{t}\oplus\left(\ \bigoplus_{i\neq j} \F_p(\alpha_{i,j})\right).\]Note that the Galois action on $\mathfrak{t}$ is the trivial one. Since the characters in the above decomposition all belong to $\{1, \nu, \nu^{-1}\}$, we are to show that $\Sh^2_{\{p\}}\left(\F_p(\psi)\right)=0$ and that $H^2(\Gp, \F_p(\psi))=0$ for all characters $\psi\in \{1, \nu, \nu^{-1}\}$.

\par First let us show that $\Sh^2_{\{p\}}\left(\F_p(\psi)\right)=0$ for the above mentioned characters. Note that by global duality of $\Sh$-groups \cite{neukirch2013cohomology}, we have that $\Sh^2_{\{p\}}\left(\F_p(\psi)\right)$ are $\Sh^1_{\{p\}}\left(\F_p(\bar{\chi}\psi^{-1})\right)$ are naturally dual to one another. Therefore, we are to check to that \[\Sh^1_{\{p\}}\left(\F_p(\bar{\chi}\psi^{-1})\right)=0\] for the characters $\psi\in \{1, \nu, \nu^{-1}\}$. A straightforward application of inflation-restriction shows that $\Sh^1_{\{p\}}\left(\F_p(\bar{\chi}\psi^{-1})\right)=0$ if $\mathcal{C}(\bar{\chi}\psi^{-1})=0$ (cf. \cite[Lemma 3.2]{ray2021constructing}). It is a well known fact that the first eigenspace $\mathcal{C}(\bar{\chi})=0$, cf. \cite[Proposition 6.16]{washington1997introduction}. On the other hand, the vanishing of $\mathcal{C}(\bar{\chi}\nu^{\pm 1})$ follows from Assumption \ref{main ass}. Therefore, we deduce that $\Sh^2_{\{p\}}(\op{Ad}\bar{\rho})=0$. 
\par Next, we show that $H^2(\op{G}_p, \op{Ad}\bar{\rho})=0$. It suffices to show that $H^2\left(\op{G}_p, \F_p(\psi)\right)=0$ for each character $\psi\in \{1, \nu, \nu^{-1}\}$. By local duality, we have that $H^2\left(\op{G}_p, \F_p(\psi)\right)$ is dual to $H^0\left(\op{G}_p, \F_p(\bar{\chi}\psi^{-1})\right)$. Since $\psi_{|\op{G}_p}$ is not equal to $\bar{\chi}$, it follows that $H^0\left(\op{G}_p, \F_p(\bar{\chi}\psi^{-1})\right)=0$, and thus, $H^2\left(\op{G}_p, \F_p(\psi)\right)=0$ for $\psi$ as above. It follows that $H^2(\op{G}_p, \op{Ad}\bar{\rho})=0$ and it follows that $H^2(\op{G}_{\{p\}}, \op{Ad}\bar{\rho})=0$.
\end{proof} 
\section{Deformation theory}\label{s 3}
\par In this section, we study the deformation theory of diagonal representations. The results in this section shall further aid our lifting construction. For a comprehensive treatment of the subject, the reader may refer to \cite{mazur1997introduction}.

\par Let $G$ be a group, $R$ be a commutative unital local ring, with maximal ideal $\mathfrak{m}_R$. Recall that $M_n(R)$ is the ring of $n\times n$-matrices with entries in $R$. Given a representation $\varrho:G\rightarrow \op{GL}_n(R)$, the associated \emph{adjoint module} denoted $\op{Ad}\varrho$, is the module $M_n(R)$ on which $g\in G$ acts via $g\cdot v:=\varrho(g) v\varrho(g)^{-1}$. There is a direct sum decomposition
\[\op{Ad}\varrho=\op{Ad}^0\varrho\oplus \op{R}\cdot \op{I}_n,\]
where $\op{Ad}^0(R)$ consists of matrices with trace equal to $0$ and $\op{I}_n$ is the identity in $M_n(R)$. We note that the Galois action on the second factor in the above decomposition is trivial.

\begin{definition}
    Let $\pi:S\rightarrow R$ be a homomorphism of commutative unital local rings and let $I$ denote the kernel of $\pi$. In this manuscript, we shall say that $\pi$ is a \emph{small-extension} if the following conditons hold
\begin{enumerate}
\item $\pi$ is surjective, 
\item $I^2=0$,
\item $I$ is principal.
\end{enumerate}
\end{definition}
Given a small extension $\pi$, let $\pi^*:\op{GL}_n(S)\rightarrow \op{GL}_n(R)$ be the homomorphism obtained by applying $\pi^*$ to each entry. 
\par The following example illustrates the above definition. Note that if $M$ and $N$ are integers such that $M<N$, then, the quotient map 
\[\pi_{M, N}: \Z/p^{N}\Z\rightarrow \Z/p^{M}\Z\] is a small extension provided $N\leq 2M$ and that the kernel of $\pi_{M,N}$ is generated by $p^M$.
\par Let us now introduce some further notation. The ideal $I$ is generated by the element $t$. Consider the map $R\rightarrow I$ sending $r$ to $tr$. Let $J$ denote the kernel of this map and set $R'$ to denote the quotient $R/J$. Thus the map sending $r$ to $tr$ induces an isomorphism of $R$-modules $R'\xrightarrow{\sim} I$. Let us introduce a little bit of notation which will be used throughout the manuscript. Given a matrix $A\in M_n(R')$ let $\widetilde{A}$ be a lift of $A$ to $M_n(S)$. Let \[t\widetilde{A}\in \op{ker}\left\{M_n(S)\rightarrow M_n(R)\right\}\] be the matrix obtained by multiplying all entries of $\widetilde{A}$ by $t$. Clearly, the matrix $t\widetilde{A}$ is independent of the choice of lift $\widetilde{A}$, and \emph{we denote this product by $tA$}.
\begin{proposition}\label{prop 3.1}
Let $\pi$ be a small extension. Then, the kernel of $\pi^*:\op{GL}_n(S)\rightarrow \op{GL}_n(R)$ defined above is abelian and can be identified with $M_n(R')$. The association takes a matrix $A\in M_n(R')$ to $\op{I}_n+tA$ in $\op{ker}\left(\op{GL}_n(S)\rightarrow \op{GL}_n(R)\right)$. 
\end{proposition}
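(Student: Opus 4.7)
The proof is largely a direct unpacking of the definitions, so the plan is to spell out the bijection $A \mapsto I_n + tA$ and then observe that the group law in the kernel reduces to addition in $M_n(R')$ once one exploits $I^2 = 0$.

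First, I would parametrize the kernel. Any matrix in $\op{ker}(\pi^*)$ has the form $I_n + X$, where every entry of $X$ lies in $I = (t)$. Writing each entry $x_{ij} = t a_{ij}$ for some $a_{ij} \in R$, the ambiguity in choosing $a_{ij}$ is precisely the ideal $J = \{r \in R : tr = 0\}$, because $ta = ta'$ if and only if $a - a' \in J$. Thus the assignment $A \mapsto I_n + tA$ descends to a well-defined, bijective map $M_n(R') \to \op{ker}(\pi^*)$ (inverse: send $I_n + X$ to the matrix in $M_n(R')$ whose $(i,j)$-entry is the unique class $a_{ij} \in R'$ with $x_{ij} = t a_{ij}$).

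Next, I would check that $I_n + tA$ is genuinely invertible. Since $t \in I$ and $I^2 = 0$, we have $t^2 = 0$, and therefore $(tA)^2 = t^2 A^2 = 0$ as an element of $M_n(S)$. Consequently
\[
(I_n + tA)(I_n - tA) = I_n - (tA)^2 = I_n,
\]
so the element indeed lies in $\op{GL}_n(S)$, and the kernel is in bijection with $M_n(R')$ via the stated formula.

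Finally, I would compute the group law. For $A, B \in M_n(R')$,
\[
(I_n + tA)(I_n + tB) = I_n + tA + tB + t^2 AB = I_n + t(A+B),
\]
again using $t^2 = 0$. This shows that the bijection $M_n(R') \to \op{ker}(\pi^*)$ is an isomorphism from the additive group $(M_n(R'), +)$ onto $\op{ker}(\pi^*)$; in particular the kernel is abelian, as claimed.

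There is no serious obstacle here: the one subtle point worth emphasizing, and which I would highlight in the write-up, is the passage from $R$ to $R'$, which is exactly what is needed to make the parametrization $A \mapsto I_n + tA$ a bijection (as opposed to merely a surjection from $M_n(R)$). Everything else follows formally from $I^2 = 0$ and the fact that $\pi$ is surjective with principal kernel.
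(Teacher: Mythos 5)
Your proof is correct and follows essentially the same route as the paper: use $t^2=0$ to verify invertibility and to collapse the group law to addition, and parametrize the kernel via $A\mapsto \op{I}_n + tA$. You are slightly more explicit than the paper about why the passage from $R$ to $R'=R/J$ makes this map a bijection rather than merely a surjection, but that point is already set up in the paper's preceding paragraph where the isomorphism $R'\xrightarrow{\sim} I$ is introduced.
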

\begin{proof}
Since $t^2=0$, it follows that $\op{I}_n+tA$ is an invertible matrix with inverse $\op{I}_n-tA$. Moreover, the map sending $A\mapsto \op{I}_n+tA$ is seen to be a homomorphism, due to the relation 
\[
\begin{split}&\left(\op{I}_n+tA\right)\left(\op{I}_n+tB\right)\\
=&\left(\op{I}_n+t\widetilde{A}\right)\left(\op{I}_n+t\widetilde{B}\right)\\
=& \op{I}_n+t(\widetilde{A}+\widetilde{B})\\
=& \op{I}_n+t(A+B).\end{split}\] It is also clear that any element in the kernel of $\pi^*$ is of the form $\op{I}_n+tA$, where $A\in M_n(R')$. 
\end{proof}

\par Our discussion returns to the map $\pi_{M, N}$ (subject to the condition that $N\leq 2M$). We set $k:=N-M$ and we find that $R'=\Z/p^k\Z$. The Proposition \ref{prop 3.1} asserts that the kernel of the reduction map $\op{GL}_n(\Z/p^{N}\Z)\rightarrow \op{GL}_n(\Z/p^M\Z)$ is identified with $M_n(\Z/p^k\Z)$, upon associating a matrix $A\in M_n(\Z/p^k\Z)$ with $\op{I}_n+p^MA$. For instance, say $n=2$, $M=2$, $N=4$ and $k=2$. Consider the matrix $A=\mtx{0}{1+p}{-1}{0}$ and let $t=p^2$. Then, the product $p^2A$ is given by $\mtx{0}{p^2+p^3}{-p^2}{0}$, and $\op{I}_2+p^2\mtx{0}{1+p}{-1}{0}$ is given by $\mtx{1}{p^2+p^3}{-p^2}{1}\in M_2(\Z/p^4 \Z)$.

\par Let us now discuss the notion of a \emph{deformation}. Let $\pi: S\rightarrow R$ be any surjective map of commutative rings and $G$ be a group. Let $\rho:G\rightarrow \op{GL}_n(R)$ be a representation of $G$ on a free $R$-module of rank $n$. 

\begin{definition}\label{def 3.1}
A \emph{lift} of $\rho$ is a representation $\widetilde{\rho}:G\rightarrow \op{GL}_n(S)$, which fits into a commutative triangle \[ \begin{tikzpicture}[node distance = 2.5 cm, auto]
            \node at (0,0) (G) {$G$};
             \node (A) at (3,0){$\op{GL}_n(R)$.};
             \node (B) at (3,2){$\op{GL}_n(S)$};
      \draw[->] (G) to node [swap]{$\rho$} (A);
       \draw[->] (B) to node{} (A);
      \draw[->] (G) to node {$\tilde{\rho}$} (B);
\end{tikzpicture},\] where the vertical map is $\pi^*$. Two lifts $\tilde{\rho}$ and $\tilde{\rho}'$ are said to be \emph{strictly equivalent} if $\widetilde{\rho}'=B \widetilde{\rho} B^{-1}$ for some matrix $B$ in $\op{ker}\left(\op{GL}_n(S)\xrightarrow{\pi^*} \op{GL}_n(R)\right)$. A \emph{deformation} of $\rho$ is a \emph{strict equivalence class} of lifts. 
\end{definition}
For the rest of this discussion, let us assume that $\pi:S\rightarrow R$ is a small extension, and that $I\simeq R'\simeq R/J$ as an $R$-module.
Given $\rho:G\rightarrow \op{GL}_n(R)$, let us assume that there exists a deformation $\widetilde{\rho}:G\rightarrow \op{GL}_n(S)$. Then, any other deformation is given by $r=(\op{I}_n+t f)\tilde{\rho}$ for a uniquely determined cohomology class $f\in H^1(G, \op{Ad}\rho_0)$, where $\rho_0$ is the reduction of $\rho$ modulo $J$. In greater detail, let $F$ be a cocycle representing $f$, consider the representation given by the formula $(\op{I}_n+t F)\widetilde{\rho}:G\rightarrow \op{GL}_n(S)$. This prescribes a well-defined lift of $\rho$. The cocyle relation for $F$ implies that the following relation holds for all $h_1, h_2\in G$, 
\[\left(\op{I}_n+t F(h_1)\right)\tilde{\rho}(h_1)\left(\op{I}_n+t F(h_2)\right)\tilde{\rho}(h_2)=\left(\op{I}_n+t F(h_1 h_2)\right)\tilde{\rho}(h_1 h_2).\] Thus, $(\op{I}_n+tF)\widetilde{\rho}$ defines a well defined homomorphism, and $(\op{I}_n+tf)\widetilde{\rho}$ defines a well defined deformation of $\rho$. Indeed, if $F_1$ and $F_2$ are two cocycles that differ by a coboundary $z=F_2-F_1$, we have that 
\begin{equation}\label{boring eq 1}(\op{I}_n+t F_2)\widetilde{\rho}=(\op{I}_n+t F_1)(\op{I}_n+t z)\widetilde{\rho}.\end{equation} By virtue of being a coboundary, \[z(h)=x-hx=x-\rho_0(h)x \rho_0(h)^{-1}.\]
Therefore, we find that 
\begin{equation}\label{boring eq 2}\begin{split}\left(\op{I}_n+t z(h)\right)= & \left(\op{I}_n+t x\right)\left(\op{I}_n+t \rho_0(h)x \rho_0(h)^{-1}\right)^{-1}\\
& \left(\op{I}_n+t x\right)\tilde{\rho}(h)\left(\op{I}_n+t x\right)^{-1}\tilde{\rho}(h)^{-1}.\\
\end{split}\end{equation}
Therefore, combining \eqref{boring eq 1} and \eqref{boring eq 2}, we find that 
\begin{equation}(\op{I}_n+tF_2)\tilde{\rho} =(\op{I}_n+tx)(\op{I}_n+tF_1)\tilde{\rho} (\op{I}_n+tx)^{-1},\end{equation} i.e., the lifts $(\op{I}_n+tF_1)\tilde{\rho}$ and $(\op{I}_n+tF_2)\tilde{\rho}$ are strictly equivalent. One thus checks that $(\op{I}_n+tf)\tilde{\rho}$ is a well defined deformation of $\rho$.
\par Recall that Lemma \ref{choice of m} guarantees the existence of an $m$-admissible lift of $\alpha$, for $m:=m(p,n)=3+\lfloor\op{log}_p(2^n+1)\rfloor$. It is perhaps of interest to note that $m$ decreases when $p$ gets larger, and hence for large enough values of $p$ and fixed $n$ the value of $m$ is $3$. We shall of course fix a value of both $n$ and $p$ in our discussion throughout the paper. Set $M$ to denote $m(n^2-n)+1$ throughout. Set $\A$ to denote the kernel of the mod-$p^M$ reduction map 
\[\A:=\op{ker}\left(\op{GL}_{n}(\Z/p^{2M})\rightarrow \op{GL}_{n}(\Z/p^{M})\right).\]
We set $N:=2M$ and set $\rho_N$ to denote the Galois representation $\rho_\beta$ where $\beta$ is an $m$-admissible $N$-lift of $\alpha$. Let $i$ be an integer which lies in the range $1\leq i\leq N$ and $\rho_i$ be the reduction of $\rho_N$ modulo $p^i$. Let $\gamma=(\gamma_1, \dots, \gamma_n)$ be the reduction of $\beta$ modulo $p^M$. Thus $\rho_M$ is the diagonal representation $\rho_{\gamma}$. 

\par Given a Galois representation $\varrho:\op{G}_{\Q}\rightarrow \op{GL}_n(R)$, the field $\Q(\varrho):=\bar{\Q}^{\op{ker}\varrho}$ is the field fixed by the kernel of $\varrho$. We shall refer to $\Q(\varrho)$ as the field \emph{cut out by $\varrho$} and we note that $\varrho$ induces an injection from $\op{Gal}(\Q(\varrho)/\Q)$ into $\op{GL}_n(R)$. If the ring $R$ is finite, then, $\Q(\varrho)$ is a finite Galois extension of $\Q$. From here on in, set $L:=\Q(\rho_M)$ to be the extension which is cut out by $\rho_M$. Let $\mathcal{M}$ be the submodule of $\mathcal{A}$ which is the image of $\rho_{N|\op{G}_L}:\op{G}_{L}\rightarrow \A$. Note that by Proposition \ref{prop 3.1}, $\A$ is identified with $\op{Ad}\rho_M$ upon identifying $\op{I}_n +p^M A$ with $A\in \op{Ad}\rho_M$. Thus $\mathcal{M}$ is viewed as a $\Z/p^M\Z$-submodule of $\op{Ad}\rho_M$, where 
\[\op{Ad}\rho_M=\mathfrak{t}_M\oplus \left(\bigoplus_{i\neq j} \Z/p^M\Z (\gamma_{i,j})\right),\]
where $\gamma_{i,j}:=\gamma_i\gamma_j^{-1}$.  Here, $\mathfrak{t}_M$ is the diagonal torus, with trivial action. 

\begin{proposition}\label{mathcal M prop}
With respect to notation above, $\mathcal{M}$ is a $\Gp$-stable submodule of $\op{Ad}\rho_M$.
\end{proposition}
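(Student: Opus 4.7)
The plan is to deduce $\Gp$-stability of $\mathcal{M}$ from the normality of $\op{G}_L$ in $\Gp$, combined with a direct check that the $\Gp$-action on $\rho_N(\op{G}_L)\subseteq \A$ corresponds, under the isomorphism $\A\xrightarrow{\sim}\op{Ad}\rho_M$ of Proposition \ref{prop 3.1}, to the adjoint action on $\op{Ad}\rho_M$.

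First, I would note that because $\rho_M$ is a representation of $\Gp$, the field $L=\Q(\rho_M)$ is Galois over $\Q$, so $\op{G}_L=\op{ker}(\rho_M)$ is a normal subgroup of $\Gp$. In particular, for any $g\in\Gp$ and any $h\in\op{G}_L$, the conjugate $ghg^{-1}$ again belongs to $\op{G}_L$, and hence $\rho_N(ghg^{-1})\in \rho_N(\op{G}_L)$, which under the identification from Proposition \ref{prop 3.1} is exactly $\mathcal{M}\subseteq \A$.

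Next, pick a class $A\in\mathcal{M}$, so $\op{I}_n+p^M A=\rho_N(h)$ for some $h\in\op{G}_L$. I would compute the conjugate directly:
$$\rho_N(g)\bigl(\op{I}_n+p^M A\bigr)\rho_N(g)^{-1}=\op{I}_n+p^M\,\rho_N(g)\,A\,\rho_N(g)^{-1}.$$
The crucial arithmetic observation is that since $N=2M$, we have $p^M\cdot p^M=0$ in $\Z/p^N\Z$. Therefore the product $p^M\rho_N(g)A\rho_N(g)^{-1}$ is insensitive to modifying $\rho_N(g)$ by any element of $p^M M_n(\Z/p^N\Z)$, and so it depends only on the mod-$p^M$ reduction $\rho_N(g)\equiv \rho_M(g)$. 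Consequently, the conjugate equals $\op{I}_n+p^M\bigl(\rho_M(g)A\rho_M(g)^{-1}\bigr)=\op{I}_n+p^M(g\cdot A)$, where $g\cdot A$ denotes the adjoint action of $g$ on $A\in\op{Ad}\rho_M$.

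Combining the two steps: on the one hand, $\rho_N(ghg^{-1})=\rho_N(g)\rho_N(h)\rho_N(g)^{-1}$ belongs to $\rho_N(\op{G}_L)$ by the normality of $\op{G}_L$; on the other hand, this element has been identified with $g\cdot A\in\op{Ad}\rho_M$. Hence $g\cdot A\in\mathcal{M}$, which is exactly the $\Gp$-stability claim. There is no substantive obstacle here; the only content is a careful bookkeeping check that under the small-extension identification $\A\simeq \op{Ad}\rho_M$, conjugation by $\rho_N(\Gp)$ on the abelian kernel $\A$ factors through $\rho_M(\Gp)$ and coincides with the adjoint action, and this is forced by the vanishing of $p^{2M}$ modulo $p^N$.
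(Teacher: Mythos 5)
Your proof is correct and follows essentially the same argument as the paper's: conjugate $\op{I}_n+p^M A=\rho_N(h)$ by $\rho_N(g)$, use normality of $\op{G}_L=\ker\rho_M$ in $\Gp$, and identify the result with $\op{I}_n+p^M(g\cdot A)$. You spell out more explicitly why $\rho_N(g)$ can be replaced by $\rho_M(g)$ in the computation (namely $p^{2M}=0$ in $\Z/p^N\Z$), a step the paper takes for granted, but this is only a matter of exposition.
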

\begin{proof}
Let $x\in \rho_N(\op{G}_L)$, and let $A\in \op{Ad}\rho_M$ be such that $x=\op{I}_n+p^MA$. Thus, in particular, $A$ is contained in $\mathcal{M}$, in accordance with the identification of $\mathcal{A}$ with $\op{Ad}\rho_M$, as is explained above. Let $h\in \op{G}_L$ be such that $x=\rho_N(h)$. For $g\in \Gp$, note that $ghg^{-1}$ is in $\op{G}_L$. We find that 
\[\begin{split}&\rho_N(ghg^{-1})=\rho_N(g)x\rho_N(g)^{-1}\\
=&\rho_N(g)\left(\op{I}_n+p^M A\right)\rho_N(g)^{-1}\\
=&\left(\op{I}_n+p^M \rho_M(g) A \rho_M(g)^{-1}\right)\\
=& \left(\op{I}_n+p^M g\cdot A \right).
\end{split}\]
Therefore, $g\cdot A$ is contained in $\mathcal{M}$.
\end{proof}
\par Recall that $e_{i,j}$ be the matrix with all entries $(s,t)\neq (i,j)$ equal to $0$ and $(i, j)$-th entry equal to $1$. For $i\neq j$, $e_{i,j}$ a generator of the submodule of $\op{Ad}\rho_M$ isomorphic to $\Z/p^M\Z(\gamma_{i,j})$. Given $x\in \A$, let $x_0$ be the projection of $x$ to $\mathfrak{t}_M$, and let $x_{i,j}$ be the projection to the $\gamma_{i,j}$-component. We find that $x_{i,j}=y_{i,j} e_{i,j}$, where $y_{i,j}\in \Z/p^M\Z$. Thus, we find that $x=x_0+\sum_{i,j} y_{i,j} e_{i,j}$, where the sum ranges over all pairs $(i,j)$ such that $i\neq j$. We remind the reader that $M:=m(n^2-n)+1$, $N:=2M$, and $\rho_N=\rho_\beta$, where $\beta$ is an $m$-admissible $N$-lift of $\alpha$.

\begin{lemma}\label{lemma 3.3}
With respect to notation above, let $(i,j)$ be a pair such that $1\leq i,j\leq n$ and $i\neq j$. Let $x\in \mathcal{M}$ and write $x_{i,j}=y_{i,j}e_{i,j}$, where $y_{i,j}\in\Z/p^M\Z$. 
\begin{enumerate}
    \item\label{p1 of lemma 3.3} Suppose that $y_{i,j}$ is not divisible by $p$, then, $\mathcal{M}$ contains $p^{m (n^2-n)} e_{i, j}$.
    \item\label{p2 of lemma 3.3}  Furthermore, $\mathcal{M}$ contains $p^{m(n^2-n)}x_0$.
\end{enumerate}
\end{lemma}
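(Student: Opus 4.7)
The plan is to leverage the eigenspace decomposition
\[\operatorname{Ad}\rho_M \;=\; \mathfrak{t}_M \oplus \bigoplus_{i \neq j} \Z/p^M\Z(\gamma_{i,j})\]
together with the $\Gp$-stability of $\mathcal{M}$ established in Proposition \ref{mathcal M prop}. For any $g \in \Gp$ and any scalar $c \in \Z/p^M\Z$, the operator $T_{g,c}(v) := g \cdot v - c\,v$ carries $\mathcal{M}$ into itself, because $\mathcal{M}$ is both Galois-stable and a $\Z/p^M\Z$-submodule. On the $(i,j)$-summand $T_{g,c}$ acts by the scalar $\gamma_{i,j}(g) - c$, and on $\mathfrak{t}_M$ by the scalar $1 - c$. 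Because each $g \in \Gp$ acts by scalars on every summand, distinct operators $T_{g,c}$ and $T_{g',c'}$ commute on $\operatorname{Ad}\rho_M$, hence on $\mathcal{M}$.

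To prove \eqref{p1 of lemma 3.3}, I fix the target pair $(i,j)$ and invoke the $m$-admissibility of $\beta$ (inherited by $\gamma$) to select: an element $g_0 \in \Gp$ with $\gamma_{i,j}(g_0) \not\equiv 1 \pmod{p^m}$, available from \eqref{c2 of admissible defn} of Definition \ref{admissible defn} since $\gamma_{i,j} \not\equiv 1 \bmod p^m$; and, for each pair $(s,t)$ with $s \neq t$ and $(s,t) \neq (i,j)$, an element $g_{s,t} \in \Gp$ with $\gamma_{s,t}(g_{s,t}) \not\equiv \gamma_{i,j}(g_{s,t}) \pmod{p^m}$, available from \eqref{c3 of admissible defn}. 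I then apply
\[D \;:=\; T_{g_0,\,1} \circ \prod_{\substack{(s,t) \neq (i,j) \\ s\neq t}} T_{g_{s,t},\, \gamma_{s,t}(g_{s,t})}\]
to $x$. The torus component $x_0$ is annihilated by $T_{g_0,1}$; each off-diagonal component $y_{s,t}\,e_{s,t}$ with $(s,t) \neq (i,j)$ is annihilated by its matching factor; and the target component survives, scaled by
\[c \;:=\; (\gamma_{i,j}(g_0) - 1) \prod_{\substack{(s,t) \neq (i,j) \\ s\neq t}} \bigl(\gamma_{i,j}(g_{s,t}) - \gamma_{s,t}(g_{s,t})\bigr),\]
so that $Dx = c\,y_{i,j}\,e_{i,j} \in \mathcal{M}$. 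For \eqref{p2 of lemma 3.3}, I instead apply $D' := \prod_{s \neq t} T_{g_{s,t},\,\gamma_{s,t}(g_{s,t})}$ with each $g_{s,t}$ chosen (again via \eqref{c2 of admissible defn}) so that $\gamma_{s,t}(g_{s,t}) \not\equiv 1 \pmod{p^m}$; this annihilates every off-diagonal component and scales $x_0$ by $c' := \prod_{s \neq t} (1 - \gamma_{s,t}(g_{s,t}))$.

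The remaining task, and the main obstacle, is a tight $p$-adic accounting. By construction, each of the $n^2 - n$ factors of $c$ (respectively $c'$) is non-zero modulo $p^m$, hence has $p$-adic valuation at most $m - 1$. Consequently $v_p(c),\, v_p(c') \leq (m-1)(n^2-n) \leq m(n^2-n) - 1 < M$. Under the hypothesis of \eqref{p1 of lemma 3.3}, $y_{i,j}$ is a unit, so $c\,y_{i,j} = u\,p^{v_p(c)}$ with $u \in (\Z/p^M\Z)^\times$; scalar-multiplying $Dx \in \mathcal{M}$ by $u^{-1} p^{m(n^2-n) - v_p(c)} \in \Z/p^M\Z$ yields $p^{m(n^2-n)}\,e_{i,j} \in \mathcal{M}$. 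The same maneuver applied to $D'x = c'\,x_0$ produces $p^{m(n^2-n)}\,x_0 \in \mathcal{M}$. The calibration $M = m(n^2-n) + 1$ is precisely what makes the cumulative valuation loss $(m-1)(n^2-n)$ stay strictly below $M$, ensuring that the isolated element does not collapse to zero in $\Z/p^M\Z$.
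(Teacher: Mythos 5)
Your proof is correct and takes essentially the same approach as the paper: the paper phrases the annihilating operators as elements $h_{s,t} = \gamma_{s,t}(g_{s,t}) - g_{s,t}$ and $h_0 = 1 - g_0$ of the group ring $\Z/p^M\Z[\Gp]$ (differing only by sign from your $T_{g,c}$), applies their product $H_{i,j}$ to $x$, and carries out the same $p$-adic valuation bookkeeping before rescaling by a unit and a power of $p$ to extract $p^{m(n^2-n)}e_{i,j}$. Your valuation bound $(m-1)(n^2-n)$ is in fact slightly tighter than the paper's stated $m(n^2-n)$, and you spell out part~\eqref{p2 of lemma 3.3} which the paper declares analogous and omits, but the argument is the same.
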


\begin{proof}
Let $(i,j)$ be a pair such that $i\neq j$ and assume that the $y_{i,j}$ is not divisible by $p$. Since $\gamma$ is $m$-admissible, $\gamma_{(s,t)}\not \equiv \gamma_{(i,j)}\mod{p^m}$ for all pairs $(s,t)\neq (i,j)$. Let $\mathfrak{R}:=\Z/p^M\Z[\Gp]$ denote the group ring with coefficients in $\Z/p^M\Z$. For each pair $(s,t)\neq (i,j)$ and $s\neq t$, may choose $g_{s,t}\in \Gp$ such that $p^m$ does not divide $\left(\gamma_{s,t}(g_{s,t})-\gamma_{i,j}(g_{s,t})\right)$. For each pair $(s,t)\neq (i,j)$ and $s\neq t$, let $h_{s,t}\in \mathfrak{R}$ denote the element $\gamma_{s,t}(g_{s,t})-g_{s,t}$. Let $g_0\in \Gp$ be such that $p^m$ does not divide $1-\gamma_{i,j}(g_{0})$, and set $h_0:=1-g_0$. 

\par Let $z\in \A$, we write $z$ as a linear combination $z=\sum_{k,l} a_{k,l} e_{k,l}$, where $a_{k,l}$ is the coefficient of $e_{k,l}$. The diagonal component of $z$ is $z_0=\sum_k a_{k,k} e_{k,k}$. We find that 
\[\begin{split}
& h_{s,t}z= \sum_{k,l} a_{k,l} \left(\gamma_{s,t}(g_{s,t})-\gamma_{k,l}(g_{s,t})\right) e_{k,l},\\
& h_0 z = \sum_{k,l}a_{k,l} \left(1-\gamma_{k,l}(g_{0})\right) e_{k,l}.
    \end{split}\]
Thus, applying $h_{s,t}$ to $z$ has an effect of killing the $e_{s,t}$-component and applying $h_0$ has the effect of killing the diagonal component $z_0$.
\par Set $H_{i,j}$ to denote the product $h_0\prod_{(s,t)} h_{s,t}$, where $(s,t)$ ranges over all pairs such that $(s,t)\neq (i,j)$ and $s\neq t$. Then, we set $x':=H_{i,j}x$. Clearly, $x'$ belongs to $\mathcal{M}$. From the discussion in the previous paragraph, we find that the $(s,t)$ component of $h_{s,t} x$ is $0$ for all pairs $(s,t)\neq (i,j)$, such that $s\neq t$, and the diagonal component of $h_0x$ is $0$. Therefore, $x'$ is a constant multiple of $e_{i,j}$. In fact, we find that 
\[x'=u \left(1-\gamma_{i,j}(g_{0})\right)\prod_{(s,t)} \left(\gamma_{s,t}(g_{s,t})-\gamma_{i,j}(g_{s,t})\right) e_{i,j},\]where the above product runs through all pairs $(s, t)\neq (i,j)$ such that $s\neq t$, and $u$ is a unit in $\Z/p^k\Z$. In fact $u$ is the $e_{i,j}$-coefficient of $x$, which is assumed to not be divisible by $p$. Since $\left(\gamma_{s,t}(g_{s,t})-\gamma_{i,j}(g_{s,t})\right)$ is not divisible by $p^m$ and there are $n^2-n-1$ pairs $(s,t)\neq (i,j)$ such that $s\neq t$, we find that the order of $p$ dividing $\prod_{(s,t)} \left(\gamma_{s,t}(g_{s,t})-\gamma_{i,j}(g_{s,t})\right)$ in the above product is $\leq m(n^2-n-1)$. Therefore, the power of $p$ dividing 
\[\left(1-\gamma_{i,j}(g_{0})\right)\prod_{(s,t)} \left(\gamma_{s,t}(g_{s,t})-\gamma_{i,j}(g_{s,t})\right)\] is at most $m(n^2-n)$. We therefore have shown that $p^{m(n^2-n)}e_{i,j}$ is also contained in $\mathcal{M}$, thus proving part \eqref{p1 of lemma 3.3}. The proof of part \eqref{p2 of lemma 3.3} is identical to that of \eqref{p1 of lemma 3.3}, hence, we omit it.
\end{proof}

\section{Controlling the image of Galois representations}\label{s 4}
\par Let $\rho:\Gp\rightarrow \op{GL}_n(\Z_p)$ be a Galois representation. In this section, we illustrate conditions for the image of $\rho$ to contain $\mathcal{U}_k$ for some large value of $k$.
Throughout, $\bar{\rho}$ is the residual representation of $\rho$, as defined in section \ref{s 2.2}. Recall that $\bar{\rho}$ is said to be $\{p\}$-unobstructed if $H^2(\op{G}_{\{p\}}, \op{Ad}\bar{\rho})=0$. Note that by Proposition \ref{prop unobstructed}, $\bar{\rho}$ is $\{p\}$-unobstructed.

\begin{proposition}\label{prop 4.1}
With respect to notation above, let \[\rho_N:\Gp\rightarrow \op{GL}_n(\Z/p^N\Z)\]be any lift of $\bar{\rho}$. Let $\tau:\Gp\rightarrow \op{GL}_1(\Z_p)$ be any character lifting $\op{det}\rho_N$. Then, there exists a lift $\rho:\Gp\rightarrow \op{GL}_n(\Z_p)$ of $\bar{\rho}$ such that \[\rho_N=\rho\mod{p^N}\text{ and }\op{det}\rho=\tau.\]
\end{proposition}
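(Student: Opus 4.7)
The plan is to construct $\rho$ by a step by step lifting process, starting from $\rho_N$ and building $\rho_{k+1}$ from $\rho_k$ inductively for each $k\geq N$, so that at every stage $\det\rho_k=\tau\mod{p^k}$. Taking the inverse limit then yields the desired lift $\rho:\Gp\rightarrow \op{GL}_n(\Z_p)$.

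For the inductive step, assume we have constructed $\rho_k:\Gp\rightarrow \op{GL}_n(\Z/p^k\Z)$ lifting $\rho_N$ with $\det\rho_k=\tau\mod{p^k}$. The quotient map $\Z/p^{k+1}\Z\rightarrow \Z/p^k\Z$ is a small extension whose kernel is principal and isomorphic to $\F_p$. By Proposition \ref{prop 3.1}, the kernel of the induced map $\op{GL}_n(\Z/p^{k+1}\Z)\rightarrow \op{GL}_n(\Z/p^k\Z)$ is canonically identified with $\op{Ad}\bar{\rho}$ via $A\mapsto \op{I}_n+p^kA$. The obstruction to lifting $\rho_k$ therefore lies in $H^2(\Gp,\op{Ad}\bar{\rho})$, which vanishes by Proposition \ref{prop unobstructed}. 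Hence some lift $\widetilde{\rho}_{k+1}:\Gp\rightarrow \op{GL}_n(\Z/p^{k+1}\Z)$ of $\rho_k$ exists.

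The lift $\widetilde{\rho}_{k+1}$ need not have the prescribed determinant. However, because $\det\rho_k=\tau\mod{p^k}$, we have $\det\widetilde{\rho}_{k+1}=\tau\cdot(1+p^kc)$ for a uniquely determined class $c\in H^1(\Gp,\F_p)$. The discussion preceding Lemma \ref{choice of m} shows that any two lifts of $\rho_k$ differ by a twist $\op{I}_n+p^k f$ for some $f\in H^1(\Gp,\op{Ad}\bar{\rho})$, and that the twisted representation $(\op{I}_n+p^kF)\widetilde{\rho}_{k+1}$ has determinant $(1+p^k\tr f)\det\widetilde{\rho}_{k+1}$. It therefore suffices to produce $f\in H^1(\Gp,\op{Ad}\bar{\rho})$ with $\tr f=-c$, and then define $\rho_{k+1}:=(\op{I}_n+p^kF)\widetilde{\rho}_{k+1}$ where $F$ is any cocycle representing $f$.

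The existence of such an $f$ follows from the decomposition
\[\op{Ad}\bar{\rho}=\mathfrak{t}\oplus\bigoplus_{i\neq j}\F_p(\alpha_{i,j}),\]
in which $\mathfrak{t}$ has trivial Galois action. The trace map $\op{Ad}\bar{\rho}\rightarrow\F_p$ vanishes on each off-diagonal summand and restricts to the summation map $\mathfrak{t}=\F_p^n\rightarrow\F_p$ on the diagonal torus. Passing to cohomology gives $H^1(\Gp,\mathfrak{t})=H^1(\Gp,\F_p)^n$, from which the induced trace map on cohomology is visibly surjective onto $H^1(\Gp,\F_p)$; for example, we may take $f$ to be the class $(-c,0,\dots,0)\in H^1(\Gp,\mathfrak{t})\hookrightarrow H^1(\Gp,\op{Ad}\bar{\rho})$. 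This is the only step requiring any real thought; everything else is formal obstruction theory. With the inductive construction complete, the inverse limit $\rho:=\varprojlim_k\rho_k$ is the desired characteristic zero lift.
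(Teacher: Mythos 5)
The paper's ``proof'' is a one-line citation to \cite[Fact 2.4 and Lemma 2.6]{ray2021constructing}; your proposal supplies the standard argument behind that citation, and it is correct. The two key ingredients — (i) the obstruction to lifting $\rho_k$ to $\op{GL}_n(\Z/p^{k+1}\Z)$ lies in $H^2(\Gp,\op{Ad}\bar{\rho})$, which vanishes by Proposition~\ref{prop unobstructed}; (ii) the determinant can be corrected at each stage by twisting with a class in $H^1(\Gp,\mathfrak t)$, since the trace map on the diagonal torus $\mathfrak t\cong\F_p^n$ is the coordinate sum and hence surjective onto $H^1(\Gp,\F_p)$ at the level of cohomology — are exactly the standard deformation-theoretic argument, carried out cleanly. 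Two small remarks. First, you attribute the twisting discussion to ``the discussion preceding Lemma~\ref{choice of m}''; it in fact appears after Definition~\ref{def 3.1} in Section~\ref{s 3}, where the paper explains how two lifts of $\rho$ across a small extension differ by $(\op I_n+tf)$ for $f\in H^1(G,\op{Ad}\rho_0)$. Second, your route to fixing the determinant through $\mathfrak t$ is more robust than using the splitting $\op{Ad}\bar\rho=\op{Ad}^0\bar\rho\oplus\F_p\cdot\op I_n$ (which the paper records in Section~\ref{s 3} but which fails when $p\mid n$); since the theorem allows any $n>1$ with $p\geq7$ fixed, so that $p\mid n$ can certainly occur, avoiding that splitting is a genuine advantage of the way you've arranged the argument.
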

\begin{proof}
The above result follows from a well known argument, for further details, we refer to \cite[Fact 2.4 and Lemma 2.6]{ray2021constructing}.
\end{proof}
 Since there are infinitely many choices of $\tau$ that lift $\op{det}\rho_N$, it follows that there are infinitely many lifts $\rho$ of $\rho_N$.
 \par For $m\geq 1$, set $\mathcal{C}_{n,m}:=\op{ker}\left\{\op{GL}_n(\Z/p^{m+1})\rightarrow \op{GL}_n(\Z/p^m)\right\}$ let 
\begin{equation}\label{exp defn}\op{exp}_m: \op{M}_n(\F_p)\rightarrow \mathcal{C}_{n,m}\end{equation} be the map taking $A\in \op{M}_n(\F_p)$ to $\op{I}_n+p^m A$. Here, we define $p^m A$ by choosing a suitable lift of $A$ and then noting that the product $p^m A$ is independent of the choice of lift. It is easy to see that \eqref{exp defn} is an isomorphism of abelian groups. We let $\op{log}_m :\mathcal{C}_{n, m}\rightarrow \op{M}_n(\F_p)$ be the inverse of $\op{exp}_m$. Given a Galois representation $\varrho: \Gp\rightarrow \op{GL}_n(\Z/p^N\Z)$, and $i\leq N$ define a subspace $\Phi_i(\varrho)$ of $\op{M}_n(\F_p)$ as follows. Set $\varrho_i$ to denote the mod-$p^i$ reduction of $\varrho$, and set $L_{i}$ be the field $\Q(\varrho_{i})$. Consider the representation 
\[\varrho_i':=\varrho_{i|L_{i}}: \op{Gal}\left(\Q_{\{p\}}/L_{i}\right)\longrightarrow \op{GL}_n(\Z/p^{i+1}\Z).\] Note that the image of the above homomorphism is by design contained in $\mathcal{C}_{n, i}$. 

\begin{definition}\label{def of Phi}With respect to notation above, let $\Phi_i(\varrho)$ be the image of the composite $\op{log}_m \circ \varrho_i':\Gp\rightarrow M_n(\F_p)$. 
\end{definition}
\begin{lemma}Identify $\op{M}_n(\F_p)$ with $\op{Ad}\bar{\rho}$. With respect to notation above, then $\Phi_i(\varrho)$ is a $\op{G}_{\{p\}}$-submodule of $\op{Ad}\bar{\rho}$. 
\end{lemma}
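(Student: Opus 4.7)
The proof will be a short, direct verification. First I would observe that since $\mathcal{C}_{n,i}$ is abelian (it is the kernel of a small extension, and any element of the form $\op{I}_n+p^iA$ satisfies $(\op{I}_n+p^iA)(\op{I}_n+p^iB)=\op{I}_n+p^i(A+B)$ because $p^{2i}=0$ in $\Z/p^{i+1}\Z$), the composite $\log_i\circ\varrho_i'$ is a group homomorphism, and hence $\Phi_i(\varrho)$ is an additive subgroup of $M_n(\F_p)$. The only content is therefore the Galois-stability.

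To verify stability, let $g\in\Gp$ and pick $A\in\Phi_i(\varrho)$, so that $A=\log_i(\varrho_i'(h))$ for some $h\in\op{Gal}(\Q_{\{p\}}/L_i)$; equivalently, $\varrho_{i+1}(h)=\op{I}_n+p^iA$ (where we choose any lift of $A$ to $M_n(\Z/p^{i+1}\Z)$, the product $p^iA$ being independent of the choice). Since $L_i=\Q(\varrho_i)$ is Galois over $\Q$, conjugation by $g$ preserves $\op{Gal}(\Q_{\{p\}}/L_i)$, so $ghg^{-1}$ again lies in this subgroup.

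The key computation is then
\[
\varrho_{i+1}(ghg^{-1})=\varrho_{i+1}(g)\bigl(\op{I}_n+p^iA\bigr)\varrho_{i+1}(g)^{-1}=\op{I}_n+p^i\,\bar{\rho}(g)A\bar{\rho}(g)^{-1},
\]
where in the last equality I use that $p^i\cdot p=0$ in $\Z/p^{i+1}\Z$, so the conjugating matrix only matters modulo $p$, and its mod-$p$ reduction is $\bar{\rho}(g)$. Applying $\log_i$ gives $\log_i(\varrho_i'(ghg^{-1}))=\bar{\rho}(g)A\bar{\rho}(g)^{-1}=g\cdot A$, which therefore lies in $\Phi_i(\varrho)$.

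There is no real obstacle here; the only subtle point is recognizing that conjugation by an element of $\op{GL}_n(\Z/p^{i+1}\Z)$ acts on the kernel $\mathcal{C}_{n,i}$ through its mod-$p$ reduction, which is precisely what identifies $\mathcal{C}_{n,i}$ with $\op{Ad}\bar{\rho}$ as a Galois module via $\log_i$.
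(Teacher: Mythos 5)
Your proof is correct and follows essentially the same route as the paper: both verify Galois stability by conjugating the element $h$ with $\varrho_{i+1}(h)=\op{I}_n+p^iA$ by an arbitrary $g\in\Gp$ and observing that the conjugation action on $\mathcal{C}_{n,i}$ factors through $\bar{\rho}(g)$, giving $g\cdot A$. Your write-up is somewhat more careful than the paper's, since you also record why $\Phi_i(\varrho)$ is closed under addition (the composite $\log_i\circ\varrho_i'$ is a homomorphism into the abelian group $M_n(\F_p)$) and you spell out why only the mod-$p$ reduction of $\varrho_{i+1}(g)$ matters in the conjugation.
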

\begin{proof}
Let $A$ be in $\Phi_i(\varrho)$, then, there exists $g'\in \op{Gal}\left(\Q_{\{p\}}/L_{i}\right)$ such that $\varrho_i'(g')=\op{I}_n +p^{i-1} A$. Let $g\in \Gp$, then, $gg'g^{-1}\in \op{Gal}\left(\Q_{\{p\}}/L_i\right)$. We find that 
\[\varrho_i'(gg'g^{-1})=\op{I}_n+p^{i-1} (g\cdot A).\]
Hence, $g\cdot A$ is also contained in $\Phi_i(\varrho)$.
\end{proof}
\begin{proposition}\label{prop bracket}
    With respect to notation above, such that $i+j\leq N$, then we find that $[\Phi_i(\varrho), \Phi_j(\varrho)]\subseteq \Phi_{i+j}(\varrho)$.
\end{proposition}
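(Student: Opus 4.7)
The plan is to prove the bracket containment by a direct commutator computation in the group $\op{GL}_n(\Z/p^N\Z)$. Let $A \in \Phi_i(\varrho)$ and $B \in \Phi_j(\varrho)$. By definition of $\Phi_i$ and $\Phi_j$, I can choose elements $g \in \op{Gal}(\Q_{\{p\}}/L_i)$ and $h \in \op{Gal}(\Q_{\{p\}}/L_j)$ whose images under $\varrho$ are $\op{I}_n + p^i \tilde{A}$ and $\op{I}_n + p^j \tilde{B}$ respectively, where $\tilde{A}, \tilde{B} \in M_n(\Z/p^{N-i}\Z)$ and $M_n(\Z/p^{N-j}\Z)$ are matrices whose mod-$p$ reductions recover $A$ and $B$. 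The main step is then to show that the commutator $[g,h] = ghg^{-1}h^{-1}$, viewed under $\varrho$, satisfies
\[
\varrho([g,h]) \equiv \op{I}_n + p^{i+j}[\tilde{A}, \tilde{B}] \pmod{p^{i+j+1}}.
\]

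To verify this, I expand the inverses as $\varrho(g)^{-1} = \op{I}_n - p^i\tilde{A} + p^{2i}\tilde{A}^2 - \cdots$ and similarly for $\varrho(h)^{-1}$, and multiply out the four factors. After cancellation of the leading $\op{I}_n - \op{I}_n$ pairings and of the terms linear in $p^i\tilde{A}$ and $p^j\tilde{B}$, the only contribution at total order $p^{i+j}$ is the commutator $p^{i+j}(\tilde{A}\tilde{B} - \tilde{B}\tilde{A})$, while every remaining term contains either two factors of $\tilde{A}$ (contributing at least $p^{2i+j}$) or two factors of $\tilde{B}$ (contributing at least $p^{i+2j}$). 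Since $i, j \geq 1$, both such orders are at least $p^{i+j+1}$, giving the claimed congruence. This Baker--Campbell--Hausdorff-type identity in the filtered ring $M_n(\Z/p^N\Z)$ is the only genuine calculation, and is routine.

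Once this is established, the conclusion follows immediately. From $\varrho([g,h]) \equiv \op{I}_n \pmod{p^{i+j}}$ we learn that $[g,h]$ lies in $\ker(\varrho_{i+j}) = \op{Gal}(\Q_{\{p\}}/L_{i+j})$, so $[g,h]$ belongs to the domain of $\varrho_{i+j}'$. Reducing $\tilde{A}$ and $\tilde{B}$ modulo $p$ then shows that
\[
\varrho_{i+j}'([g,h]) = \op{exp}_{i+j}\bigl([A,B]\bigr) \in \mathcal{C}_{n,i+j}.
\]
Applying $\op{log}_{i+j}$ we conclude $[A,B] \in \Phi_{i+j}(\varrho)$, as desired.

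The main potential obstacle is purely bookkeeping: one must track orders of $p$ carefully in the expansion of the group commutator and be certain that all cross-terms lie in $p^{i+j+1}M_n(\Z/p^N\Z)$. The independence of the argument on the chosen lifts $\tilde{A}, \tilde{B}$ of $A, B$ is automatic since $[\tilde{A},\tilde{B}] \bmod p = [A,B]$, and the hypothesis $i + j \leq N$ guarantees that reducing modulo $p^{i+j}$ (and working modulo $p^{i+j+1}$ in the intermediate step) makes sense within the target of $\varrho$.
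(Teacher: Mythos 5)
Your proof is correct and is the same direct commutator computation that the paper outsources to \cite[Lemma~2.8]{ray2021constructing}. One bookkeeping caveat in the middle step: after cancelling the linear terms you assert that every remaining non-commutator term has at least two $\tilde A$'s (hence order $\ge p^{2i+j}$) or two $\tilde B$'s (hence order $\ge p^{i+2j}$), but the naive expansion also produces \emph{pure} terms such as $p^{2i}\tilde A^2$ and $p^{2j}\tilde B^2$, whose orders $2i$ and $2j$ can each be strictly smaller than $i+j$ (take $i\ne j$); these must be seen to cancel, which they do, and your stated bound implicitly presumes this cancellation. The cleanest way to make the whole estimate transparent is the exact ring identity
\[
(\op{I}_n+x)(\op{I}_n+y)(\op{I}_n+x)^{-1}(\op{I}_n+y)^{-1}
=\op{I}_n+(xy-yx)\,(\op{I}_n+x)^{-1}(\op{I}_n+y)^{-1},
\]
applied with $x=p^i\tilde A$, $y=p^j\tilde B$: every nonidentity term carries the factor $xy-yx=p^{i+j}(\tilde A\tilde B-\tilde B\tilde A)$, so there are no pure-$\tilde A$ or pure-$\tilde B$ contributions at all, and the corrections are divisible by $p^{i+j+\min(i,j)}$ and hence by $p^{i+j+1}$. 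With that one clarification, the remainder of your argument (passing from $\varrho([g,h])\equiv\op{I}_n\pmod{p^{i+j}}$ to $[g,h]\in\op{Gal}(\Q_{\{p\}}/L_{i+j})$, then applying $\op{log}_{i+j}$) is exactly right.
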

\begin{proof}
See the proof of \cite[Lemma 2.8]{ray2021constructing}. 
\end{proof}
The module $\op{Ad}\bar{\rho}$ is equipped with a Lie bracket $[X,Y]:=XY-YX$. Note that $\op{Ad}^0\bar{\rho}=[\op{Ad}^0\bar{\rho},\op{Ad}^0\bar{\rho}]$.
\begin{lemma}\label{lemma 4.4}
Suppose that $\rho:\Gp\rightarrow \op{GL}_n(\Z_p)$ is a Galois representation such that for some $i\geq 1$, $\Phi_i(\rho)$ contains $\op{Ad}^0\bar{\rho}$. Then, 
\begin{enumerate}
    \item for all $j\geq i$, $\Phi_j(\rho)$ contains $\op{Ad}^0\bar{\rho}$,
    \item the image of $\rho$ contains $\mathcal{U}_i$.
\end{enumerate}
\end{lemma}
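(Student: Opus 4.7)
The plan is to establish (1) via a monotonicity property of the family $\{\Phi_j(\rho)\}_{j \geq 1}$, and then to deduce (2) by a step-by-step approximation argument in the compact profinite group $\Gp$.

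For (1), I would prove the stronger statement that $\Phi_j(\rho) \subseteq \Phi_{j+1}(\rho)$ for every $j \geq 1$; iterating this starting from $j = i$ immediately gives the desired chain $\op{Ad}^0\bar{\rho} \subseteq \Phi_i(\rho) \subseteq \Phi_{i+1}(\rho) \subseteq \cdots$. The idea is that if $A \in \Phi_j(\rho)$ is witnessed by $g \in \op{Gal}(\Q_{\{p\}}/L_j)$ with $\rho_{j+1}(g) = \op{I}_n + p^j \tilde{A}$, then $g^p$ witnesses $A \in \Phi_{j+1}(\rho)$. This relies on the $p$-th power identity
\[(\op{I}_n + Y)^p \equiv \op{I}_n + pY \pmod{p^{j+2}}\]
for any $Y \in M_n(\Z_p)$ of $p$-adic valuation at least $j \geq 1$, which follows from $v_p\!\left(\binom{p}{k}\right) = 1$ for $1 \leq k \leq p-1$ (forcing the middle terms of the binomial expansion to have valuation $\geq 1 + jk \geq j+2$) together with $pj \geq j+2$ for $p \geq 3$ and $j \geq 1$ (controlling the top term). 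Applied to $Y = \rho_{j+2}(g) - \op{I}_n$, this yields $\rho_{j+2}(g^p) \equiv \op{I}_n + p^{j+1}\tilde{A} \pmod{p^{j+2}}$, which simultaneously places $g^p$ in $\op{Gal}(\Q_{\{p\}}/L_{j+1})$ and shows $\op{log}_{j+1}\!\left(\rho_{j+2}(g^p)\right) = A$.

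For (2), given $U \in \mathcal{U}_i$ I would construct $g \in \Gp$ with $\rho(g) = U$ by successive approximation. Writing $U \equiv \op{I}_n + p^i B \pmod{p^{i+1}}$, the identity $\det U = 1$ forces $\tr B \equiv 0 \pmod{p}$, so $\bar B \in \op{Ad}^0\bar\rho \subseteq \Phi_i(\rho)$ provides some $g_1 \in \Gp$ with $\rho(g_1) \equiv U \pmod{p^{i+1}}$. The residual $U \rho(g_1)^{-1}$ lies in $\mathcal{U}_{i+1}$, so applying the same reasoning at the next level (now using $\Phi_{i+1}(\rho) \supseteq \op{Ad}^0\bar\rho$ from part (1)) yields $g_2$ with $\rho(g_1 g_2) \equiv U \pmod{p^{i+2}}$; iterating produces $h_k := g_1 g_2 \cdots g_k \in \Gp$ with $\rho(h_k) \equiv U \pmod{p^{i+k}}$ for every $k \geq 1$. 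To pass from approximation to an exact preimage, the sets $X_k := \{g \in \Gp : \rho(g) \equiv U \pmod{p^{i+k}}\}$ are closed in $\Gp$ (preimages of clopen subsets of $\op{GL}_n(\Z_p)$ under the continuous map $\rho$), non-empty (each contains $h_k$), and form a descending chain, so compactness of $\Gp$ together with the finite intersection property forces $\bigcap_k X_k \neq \emptyset$; any element of this intersection maps to $U$. The only substantive step in the whole argument is the $p$-th power identity establishing the monotonicity $\Phi_j(\rho) \subseteq \Phi_{j+1}(\rho)$; once that is in hand, both parts follow in a routine manner.
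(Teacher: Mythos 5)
The paper simply cites \cite[Lemma 2.9]{ray2021constructing} for this statement, so there is no in-text proof to compare against. Your treatment of part (1) via the $p$-th power congruence $(\op{I}_n+Y)^p\equiv \op{I}_n+pY\pmod{p^{j+2}}$ is correct and is a standard way to get the monotonicity $\Phi_j(\rho)\subseteq\Phi_{j+1}(\rho)$.

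However, part (2) has a genuine gap at the sentence ``The residual $U\rho(g_1)^{-1}$ lies in $\mathcal{U}_{i+1}$.'' The group $\mathcal{U}_{i+1}$ sits inside $\op{SL}_n(\Z_p)$, so membership requires $\det\bigl(U\rho(g_1)^{-1}\bigr)=1$, i.e.\ $\det\rho(g_1)=1$. But $\rho$ takes values in $\op{GL}_n(\Z_p)$, and the witness $g_1\in\op{Gal}(\Q_{\{p\}}/L_i)$ supplied by $\bar B\in\Phi_i(\rho)$ is only known to satisfy $\rho(g_1)\equiv \op{I}_n+p^i B\pmod{p^{i+1}}$, which gives $\det\rho(g_1)\equiv 1\pmod{p^{i+1}}$ (because $\tr B\equiv 0$) but says nothing mod $p^{i+2}$. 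Writing $U\rho(g_1)^{-1}\equiv \op{I}_n+p^{i+1}B_1\pmod{p^{i+2}}$, one computes $\tr B_1 \equiv -u\pmod p$ where $\det\rho(g_1)=1+p^{i+1}u$, so the condition $\tr B_1\equiv 0\pmod p$ you need to invoke $\op{Ad}^0\bar\rho\subseteq\Phi_{i+1}(\rho)$ at the next step is exactly the unverified assertion $\det\rho(g_1)\equiv 1\pmod{p^{i+2}}$. This is not a cosmetic issue: when $\Phi_{i+1}(\rho)=\op{Ad}^0\bar\rho$ exactly, every available correction $g_1\mapsto g_1 h$ with $h\in\ker\rho_{i+1}$ changes $\det\rho(g_1)$ only by an element of $\det\rho(\ker\rho_{i+1})\subseteq 1+p^{i+2}\Z_p$, so the class of $u\bmod p$ cannot be adjusted, and the induction cannot proceed. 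Closing this gap requires an additional idea --- for instance producing determinant-one lifts via commutators, using $\op{Ad}^0\bar\rho=[\op{Ad}^0\bar\rho,\op{Ad}^0\bar\rho]$, together with an argument relating the filtration levels --- which is not present in the proposal.
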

\begin{proof}
The above result is \cite[Lemma 2.9]{ray2021constructing}.
\end{proof}
Let $\mu$ be the element $\mu=\sum_{i=1}^n a_i e_{i,i}\in \op{Ad}\bar{\rho}$, where $a_i=\frac{1+(-1)^i}{2}$ in $\mathfrak{t}$. 

\begin{proposition}\label{main prop}
Suppose that $\rho:\Gp\rightarrow \op{GL}_n(\Z_p)$ is a Galois representation such that for some $m\geq 1$, $\Phi_m(\rho)$ contains the following elements
\begin{enumerate}
    \item the element $\mu$ as defined above,
    \item $e_{i,j}$ for all pairs $(i,j)$ such that $i+j$ is odd.
\end{enumerate}
Then, the following assertions hold
\begin{enumerate}
    \item $\Phi_{4m}(\rho)$ contains $\op{Ad}^0\bar{\rho}$, 
    \item the image of $\rho$ contains $\mathcal{U}_{4m}$. 
\end{enumerate}
\end{proposition}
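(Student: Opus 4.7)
The plan is to produce $\op{Ad}^0\bar{\rho}$ inside $\Phi_{2m}(\rho)$ by two applications of Proposition \ref{prop bracket} (which supplies $[\Phi_i(\rho),\Phi_j(\rho)]\subseteq \Phi_{i+j}(\rho)$) combined with the elementary-matrix identity $[e_{a,b}, e_{c,d}] = \delta_{bc}\,e_{a,d} - \delta_{da}\,e_{c,b}$. The two conclusions at level $4m$ then follow formally from Lemma \ref{lemma 4.4}. Notably the hypothesis element $\mu$ does not enter the computation I have in mind; the diagonal part of $\op{Ad}^0\bar{\rho}$ is recovered purely from off-diagonal commutators, so $\mu$ is presumably carried through the hypothesis because the construction in section \ref{s 5} delivers it automatically and it is convenient to have on record.

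First I would fill in the off-diagonal positions missing from the hypothesis, namely $e_{i,j}$ with $i\neq j$ and $i+j$ even. In this case $i$ and $j$ share a parity, so for $n\geq 3$ there exists an index $k\in\{1,\dots,n\}$ of the opposite parity, automatically distinct from both $i$ and $j$. The matrices $e_{i,k}$ and $e_{k,j}$ then have odd index-sum and hence lie in $\Phi_m(\rho)$ by hypothesis, so $[e_{i,k},e_{k,j}] = e_{i,j}$ belongs to $\Phi_{2m}(\rho)$ by Proposition \ref{prop bracket}; the case $n=2$ is vacuous since every off-diagonal pair already has odd sum.

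Next, for each $k=1,\dots,n-1$ the pair $(k,k+1)$ has odd sum, so $e_{k,k+1}$ and $e_{k+1,k}$ both lie in $\Phi_m(\rho)$, and $[e_{k,k+1}, e_{k+1,k}] = e_{k,k}-e_{k+1,k+1}$ sits in $\Phi_{2m}(\rho)$. The resulting $n-1$ matrices form a basis of $\mathfrak{t}_0$, so combining with the previous step yields $\op{Ad}^0\bar{\rho}\subseteq \Phi_{2m}(\rho)$. Conclusion (1) then follows from Lemma \ref{lemma 4.4}(1), which propagates this inclusion up to $\Phi_{4m}(\rho)$, and conclusion (2) follows from Lemma \ref{lemma 4.4}(2) applied at level $2m$, since $\mathcal{U}_{2m}\supseteq \mathcal{U}_{4m}$. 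The only delicate point is that the subspaces $\Phi_i(\rho)$ are not a priori nested in $i$, so Lemma \ref{lemma 4.4}(1) is genuinely needed to transport the $2m$-level inclusion to level $4m$; the parity bookkeeping in the off-diagonal step is trivial for $n\geq 2$ and presents no real obstacle.
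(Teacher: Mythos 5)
Your argument has a genuine gap. You claim to show $\op{Ad}^0\bar{\rho}\subseteq\Phi_{2m}(\rho)$, but what you actually establish in $\Phi_{2m}(\rho)$ is only the diagonal piece $\mathfrak{t}_0$ together with the off-diagonal $e_{i,j}$ having $i+j$ \emph{even}. The off-diagonal $e_{i,j}$ with $i+j$ \emph{odd} are known to lie in $\Phi_m(\rho)$ by hypothesis, but—as you yourself note—the subspaces $\Phi_i(\rho)$ are not a priori nested, so this does not place them in $\Phi_{2m}(\rho)$. Hence the claimed inclusion $\op{Ad}^0\bar{\rho}\subseteq\Phi_{2m}(\rho)$ is not justified, and both of your applications of Lemma \ref{lemma 4.4} fail because that lemma needs the \emph{full} trace-zero space, not just its even-graded part.

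This is precisely where the element $\mu$, which you set aside, is essential and not merely cosmetic. The paper brackets $\mu$ with $e_{i,j}$: since $[\mu,e_{i,j}]=(a_i-a_j)e_{i,j}=\pm e_{i,j}$ when $i+j$ is odd, this puts the odd-sum $e_{i,j}$ into $\Phi_{2m}(\rho)$, and then (after getting even-sum $e_{i,j}$ exactly as you do) iterating $[\Phi_m,\Phi_{(r-1)m}]\subseteq\Phi_{rm}$ carries all the $e_{i,j}$ up to $\Phi_{4m}(\rho)$ so that $\mathfrak{t}_0=[e_{i,j},e_{j,i}]$ lands there as well. Without $\mu$ the obstruction is structural: assign the $\Z/2\Z$-grading on $\mathfrak{sl}_n$ in which $e_{i,j}$ has degree $i+j\bmod 2$ and the diagonal has degree $0$. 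Your hypothesis (minus $\mu$) places only degree-$1$ elements in $\Phi_m(\rho)$, and since brackets respect the grading, every iterated bracket of depth $r$ lands in degree $r\bmod 2$. So any level $\Phi_{rm}(\rho)$ you reach via Proposition \ref{prop bracket} contains only one graded piece, never all of $\op{Ad}^0\bar{\rho}$, and Lemma \ref{lemma 4.4} can never be invoked. The element $\mu$ has degree $0$ and breaks this parity constraint. Your two bracket identities for $\mathfrak{t}_0$ and for even-sum off-diagonals are correct and match the paper; the missing ingredient is the use of $\mu$ to produce the odd-sum $e_{i,j}$ at levels above $m$.
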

\begin{proof}
First, we assume that $n>2$. Let us show that $\Phi_{2m}(\rho)$ contains all $e_{i,j}$ for which $(i,j)$ is a pair such that $i\neq j$. Note that by Proposition \ref{prop bracket}, we have that $[\Phi_m(\rho), \Phi_m(\rho)]\subseteq \Phi_{2m}(\rho)$. Let $(i, j)$ be a pair such that $i+j$ is odd. Since $\mu$ and $e_{i,j}$ are both contained in $\Phi_m(\rho)$, it follows that $[\mu, e_{i,j}]\in \Phi_{2m}(\rho)$. Note that $[\mu, e_{i,j}]=(a_i-a_j) e_{i, j}$. Since $i\not \equiv j\mod{2}$, either $i$ is odd and $j$ is even, or, $i$ is even and $j$ is odd. We find that 
\[[\mu, e_{i,j}]=\begin{cases}
& e_{i,j}\text{ if }i\text{ is even and }j\text{ is odd,}\\
& -e_{i,j}\text{ if }j\text{ is even and }i\text{ is odd.}
\end{cases}\]
Therefore, we find that $e_{i,j}$ is contained in $\Phi_{2m}(\rho)$ for all pairs $(i,j)$ such that $i+j$ is odd. Now, let $(i,j)$ be a pair such that $i\neq j$ and $i+j$ is even, i.e., $i\equiv j\mod{2}$. Since $n\geq 3$, there exists $l$ such that $i+l$ and $j+l$ are both odd. Then, by assumption $e_{i,l}$ and $e_{l,j}$ are both contained in $\Phi_m(\rho)$. The relation $e_{i,j}=[e_{i,l}, e_{l,j}]$ implies that $e_{i,j}$ is contained in $\Phi_{2m}(\rho)$. Therefore, we have shown that $e_{i,j}$ is contained in $\Phi_{2m}(\rho)$ for all pairs $(i,j)$ such that $i\neq j$.
\par Note that $[\Phi_{m}(\rho), \Phi_{(r-1)m}(\rho)]$ is contained in $\Phi_{rm}(\rho)$ for $r\in \Z_{\geq 2}$. By induction on $r$, the same argument as above shows that for all $r\in \Z_{\geq 2}$, $\Phi_{rm}(\rho)$ contains $e_{i,j}$ for all pairs such that $i\neq j$. Since $[\Phi_{2m}(\rho), \Phi_{2m}(\rho)]$ is contained in $\Phi_{4m}(\rho)$, we find that $e_{i,i}-e_{j,j}=[e_{i,j}, e_{j,i}]$ is contained in $\Phi_{4m}(\rho)$ for all pairs $(i,j)$. Hence, we have shown that $\Phi_{4m}(\rho)$ contains $\op{Ad}^0\bar{\rho}$, the subspace of $\op{Ad}\bar{\rho}$ consisting of trace $0$ matrices. It follows from Lemma \ref{lemma 4.4} then the image of $\rho$ contains $\mathcal{U}_{4m}$. Therefore, we have proved the result for the case when $n>2$.

\par Next, we assume that $n=2$. In this case, a slightly different argument is required. Since $[\op{Ad}^0\bar{\rho}, \op{Ad}^0\bar{\rho}]=\op{Ad}^0\bar{\rho}$, it suffices to show that $\Phi_{2m}(\rho)$ contains $\op{Ad}^0\bar{\rho}$. Note that by assumption, $\Phi_m(\rho)$ contains $\mu=e_{1, 1}$, $e_{1,2}$ and $e_{2,1}$. Therefore, $\Phi_{2m}(\rho)$ contains 
\[\begin{split} & e_{1,2}=[e_{1,1}, e_{1,2}],\\
& e_{2,1}=[e_{2,1}, e_{1,1}],\\
& e_{1,1}-e_{2,2}=[e_{1,2}, e_{2,1}],
\end{split}
\]and therefore, contains $\op{Ad}^0\bar{\rho}$. Thus, the result follows from Lemma \ref{lemma 4.4}.
\end{proof}

\section{Proof of the main result}\label{s 5}

\par In this section, we shall prove the main result. We begin with a few preliminary Lemmas.
\begin{lemma}\label{lemma 5.1}
Suppose that $\psi:\Gp\rightarrow \left(\Z/p^k\Z\right)^\times $ is a character with mod $p$ reduction $\bar{\psi}$. Assume that all of the following conditions are satisfied
\begin{enumerate}
\item $\bar{\psi}$ is odd, i.e., $\bar{\psi}(c)=-1$, where $c$ denotes complex conjugation,
\item $H^0\left(\Gp, \F_p(\bar{\psi})\right)=0$,
\item $H^2\left(\Gp, \F_p(\bar{\psi})\right)=0$.
\end{enumerate}
Then, there is an isomorphism $H^1\left(\Gp, \Z/p^k\Z(\psi)\right)\simeq \Z/p^k\Z$.
\end{lemma}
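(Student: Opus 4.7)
The plan is to argue by induction on $k$, with the base case $k=1$ handled by Tate's global Euler--Poincaré characteristic formula. Applied to the finite $\Gp$-module $\F_p(\bar\psi)$ (with $S=\{p,\infty\}$), the formula reads
\[
\frac{|H^0(\Gp,\F_p(\bar\psi))|\cdot|H^2(\Gp,\F_p(\bar\psi))|}{|H^1(\Gp,\F_p(\bar\psi))|}=\frac{|H^0(\op{G}_\infty,\F_p(\bar\psi))|}{|\F_p(\bar\psi)|}.
\]
Since $\bar\psi(c)=-1$ and $p$ is odd, the archimedean invariants $H^0(\op{G}_\infty,\F_p(\bar\psi))$ vanish, so the right-hand side equals $1/p$; combined with hypotheses (2) and (3), this forces $|H^1(\Gp,\F_p(\bar\psi))|=p$ and hence $H^1\cong\Z/p\Z$, confirming the base case.

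For the inductive step, assume the conclusion holds for $k-1$. I first verify that $H^0(\Gp,\Z/p^k\Z(\psi))=0$ and $H^2(\Gp,\Z/p^k\Z(\psi))=0$. The $H^0$ vanishing is immediate: any nonzero invariant, written as $p^j w$ with $(w,p)=1$ and $j<k$, forces its reduction $\bar w\in\F_p(\bar\psi)$ to be a nonzero $\Gp$-invariant, contradicting hypothesis (2). The $H^2$ vanishing will follow from the long exact sequence associated to
\[
0\to\Z/p^{k-1}\Z(\psi)\xrightarrow{\,p\,}\Z/p^k\Z(\psi)\to\F_p(\bar\psi)\to 0,
\]
combined with hypothesis (3) and the inductive assumption $H^2(\Gp,\Z/p^{k-1}\Z(\psi))=0$.

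Finally, truncating the long exact sequence attached to the same short exact sequence by the vanishing statements just established produces
\[
0\to H^1(\Gp,\Z/p^{k-1}\Z(\psi))\xrightarrow{p_\ast} H^1(\Gp,\Z/p^k\Z(\psi))\to H^1(\Gp,\F_p(\bar\psi))\to 0.
\]
A short cocycle computation will confirm that the image of $p_\ast$ coincides with $p\cdot H^1(\Gp,\Z/p^k\Z(\psi))$, so the quotient $H^1(\Gp,\Z/p^k\Z(\psi))/p\cdot H^1(\Gp,\Z/p^k\Z(\psi))$ has order $p$. Nakayama's lemma then forces $H^1(\Gp,\Z/p^k\Z(\psi))$ to be a cyclic $\Z/p^k\Z$-module, and a second application of the global Euler--Poincaré formula directly to $\Z/p^k\Z(\psi)$ (using $\psi(c)=-1$, so archimedean invariants still vanish) pins its order down to $p^k$. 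The main obstacle is precisely this cyclicity step: Euler--Poincaré alone only determines the order of $H^1$ and cannot distinguish $\Z/p^k\Z$ from $\Z/p^{k-1}\Z\oplus\Z/p\Z$, so the identification of $p_\ast$ with multiplication by $p$ on the middle term is the decisive point that allows Nakayama to be invoked.
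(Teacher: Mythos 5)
Your proof is correct and follows essentially the same strategy as the paper: induction on $k$ with the Euler--Poincar\'e formula handling the base case and a long exact sequence driving the inductive step. The one genuine difference is the choice of short exact sequence: you use $0\to\Z/p^{k-1}\Z(\psi)\xrightarrow{p}\Z/p^k\Z(\psi)\to\F_p(\bar\psi)\to 0$, whereas the paper uses the dual filtration $0\to\F_p(\bar\psi)\to\Z/p^k\Z(\psi)\to\Z/p^{k-1}\Z(\psi_{k-1})\to 0$. Your version makes the cyclicity step cleaner, since Nakayama applies directly once you know $H^1/pH^1\cong\Z/p\Z$, whereas the paper's ``since the last map is the mod-$p^{k-1}$ reduction map'' is rather terse. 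Two small remarks. First, the ``short cocycle computation'' for $\mathrm{im}(p_\ast)=p\cdot H^1(\Gp,\Z/p^k\Z(\psi))$ is not purely formal: the inclusion $\supseteq$ is, but $\subseteq$ requires lifting a cocycle valued in $\Z/p^{k-1}\Z(\psi)$ to one valued in $\Z/p^k\Z(\psi)$, and the obstruction lives in $H^2(\Gp,\F_p(\bar\psi))$; this is precisely where hypothesis (3) is used, so it is worth flagging. (Equivalently, one needs surjectivity of the reduction map $H^1(\Gp,\Z/p^k\Z(\psi))\to H^1(\Gp,\Z/p^{k-1}\Z(\psi_{k-1}))$, which the paper's SES provides.) Second, the final application of Euler--Poincar\'e to $\Z/p^k\Z(\psi)$ is unnecessary: your displayed short exact sequence already gives $|H^1(\Gp,\Z/p^k\Z(\psi))|=p^{k-1}\cdot p=p^k$, so the order comes for free from the induction.
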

\begin{proof}
First, we prove the result when $k=1$, in which case, $\psi$ is identified with its reduction $\bar{\psi}$. Since $\bar{\psi}$ is odd and $H^i\left(\Gp, \F_p(\bar{\psi})\right)=0$ for $i=0,2$, it follows from the global Euler characteristic formula (cf. \cite[Theorem 8.7.4]{neukirch2013cohomology}) that $H^1\left(\Gp, \F_p(\bar{\psi})\right)\simeq \Z/p\Z$. For $i\leq k$, let $\psi_i$ be the mod-$p^i$ reduction of $\psi$. Since $H^0(\Gp, \F_p(\bar{\psi}))=0$, it follows that $H^0\left(\Gp, \Z/p^{k-1}\Z(\psi_{k-1})\right)=0$. Consider the long exact sequence in cohomology associated with the following short exact sequence of Galois modules
\[0\rightarrow \F_p(\bar{\psi})\rightarrow \Z/p^k \Z(\psi)\rightarrow \Z/p^{k-1}\Z(\psi_{k-1})\rightarrow 0.\]Thus, we have the short exact sequence of cohomology groups
\begin{equation}\label{ses cohomology groups}0\rightarrow H^1\left(\Gp,\F_p(\bar{\psi})\right)\rightarrow H^1\left(\Gp, \Z/p^k\Z(\psi)\right)\rightarrow H^1\left(\Gp, \Z/p^{k-1}\Z(\psi_{k-1})\right)\rightarrow 0.\end{equation}
By induction, assume that the group $H^1\left(\Gp, \Z/p^{k-1}\Z(\psi_{k-1})\right)$ is isomorphic to $\Z/p^{k-1}\Z$. This shows that $H^1\left(\Gp, \Z/p^k\Z(\psi)\right)$ is either isomorphic to $\Z/p\Z\oplus \Z/p^{k-1}\Z$ or to $\Z/p^k\Z$. Since the last map is the mod-$p^{k-1}$ reduction map, it follows that $H^1\left(\Gp, \Z/p^k(\psi)\right)$ must be isomorphic to $\Z/p^k\Z$. This completes the proof.
\end{proof}

\par Recall that $\rho_M$ is the diagonal Galois representation $\rho_{\gamma}$ associated with an $m$-admissible $M$-lift $\gamma=(\gamma_1, \dots, \gamma_n)$ of $\alpha$. Let $L$ (resp. $L_0$) be the field extension $\Q(\rho_M)$ (resp. $\Q(\bar{\rho})$). Note that since $\rho_M$ is a diagonal representation, $L/\Q$ is an abelian extension. Set $\Delta:=\op{Gal}(L/\Q)$ (resp. $\Delta_0:=\op{Gal}(L_0/\Q)$). Let $\psi$ denote the character $\gamma_{i,j}:=\gamma_i \gamma_j^{-1}$ for a pair $(i,j)$ such that $i+j$ is odd. In this case, we note that $\bar{\psi}=\bar{\gamma}_{i,j}=\alpha_{i,j}$ and hence is odd. It follows from the argument in the proof of Proposition \ref{prop unobstructed} that $H^i\left(\Gp, \F_p(\bar{\psi})\right)=0$ for $i=0,2$. Recall from Lemma \ref{lemma 5.1} that $H^1(\Gp, \Z/p^M\Z (\psi))$ is isomorphic to $\Z/p^M\Z$ and let $f_{i,j}$ be a generator. The Galois action of $\op{Gal}(\Q_{\{p\}}/L)$ on $\Z/p^M \Z(\psi)$ is trivial. Therefore, $H^1\left(\op{Gal}(\Q_{\{p\}}/L), \Z/p^M\Z(\psi)\right)$ consists of homomorphisms. There is a natural action of $\Delta$ on  $\op{Hom}\left(\op{Gal}(\Q_{\{p\}}/L), \Z/p^M\Z(\psi)\right)$, which is described as follows. The restriction to $\op{Gal}(\Q_{\{p\}}/L)$ gives a homorphism \[\op{res}: H^1(\Gp, \Z/p^M\Z(\psi))\rightarrow \op{Hom}\left(\op{Gal}(\Q_{\{p\}}/L), \Z/p^M\Z(\psi)\right)^{\Delta}.\] Given $\sigma\in \Delta$, choose a lift $\tilde{\sigma}$ to $\Gp$. For $g\in \op{Gal}(\Q_{\{p\}}/L)$, observe that $\tilde{\sigma} g \tilde{\sigma}^{-1}$ is contained in $\op{Gal}(\Q_{\{p\}}/L)$, since $L/\Q$ is a Galois extension. Let $h$ be a homomorphism in $\op{Hom}\left(\op{Gal}(\Q_{\{p\}}/L), \Z/p^M\Z(\psi)\right)$. Note that the value $h(\tilde{\sigma} g \tilde{\sigma}^{-1})$ is independent of the choice of lift $\tilde{\sigma}$ of $\sigma$, since $h$ must factor through an abelian quotient of $\op{Gal}(\Q_{\{p\}}/L)$. The action of $\Delta$ is given by setting $(\sigma \cdot h)(g):=\psi(\sigma)^{-1} h(\tilde{\sigma} g \tilde{\sigma}^{-1})$. Therefore, the subgroup $\op{Hom}\left(\op{Gal}(\Q_{\{p\}}/L), \Z/p^M\Z(\psi)\right)^\Delta$ consists of those homomorphisms which satisfy the condition 
\begin{equation}\label{delta invariant}
h(\tilde{\sigma} g \tilde{\sigma}^{-1})=\psi(\sigma) h(g).
\end{equation}

\begin{lemma}\label{lemma 5.2}
With respect to notation above, the restriction map 
\begin{equation}\label{lemma 5.2 restriction map}\op{res}: H^1(\Gp, \Z/p^M\Z(\psi))\rightarrow \op{Hom}\left(\op{Gal}(\Q_{\{p\}}/L), \Z/p^M\Z(\psi)\right)^{\Delta}\end{equation} is injective. In other words, the homomorphism $\op{res}(f_{i,j})$ surjects onto $\Z/p^M\Z$.
\end{lemma}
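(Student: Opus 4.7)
The plan is to apply the inflation--restriction sequence for the normal subgroup $\op{Gal}(\Q_{\{p\}}/L)\subset\Gp$. Since $\psi=\gamma_i\gamma_j^{-1}$ factors through $\Delta=\op{Gal}(L/\Q)$, the module $\Z/p^M\Z(\psi)$ has trivial $\op{Gal}(\Q_{\{p\}}/L)$-action, so the kernel of $\op{res}$ is precisely $H^1(\Delta,\Z/p^M\Z(\psi))$. It therefore suffices to prove this group vanishes. The equivalent ``in other words'' formulation in the statement is automatic: since $H^1(\Gp,\Z/p^M\Z(\psi))\cong\Z/p^M\Z$ is cyclic with generator $f_{i,j}$ by Lemma~\ref{lemma 5.1}, injectivity of $\op{res}$ is the same as saying that $\op{res}(f_{i,j})$ has additive order $p^M$ in the target, i.e.\ is a surjection onto $\Z/p^M\Z$.

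I would first reduce to the residual vanishing $H^1(\Delta,\F_p(\bar{\psi}))=0$ by induction on $M$, using the $\Delta$-equivariant short exact sequence
\[0\to \F_p(\bar{\psi})\xrightarrow{1\mapsto p^{M-1}}\Z/p^M\Z(\psi)\to \Z/p^{M-1}\Z(\psi\bmod p^{M-1})\to 0,\]
which is well defined because $\psi\equiv\bar{\psi}\pmod p$. For the residual problem, set $\Delta':=\op{Gal}(L/L_0)$, giving an exact sequence $1\to\Delta'\to\Delta\to\Delta_0\to 1$; observe that $|\Delta_0|$ divides $p-1$ (as $L_0\subseteq\Q(\mu_p)$) and that $\bar{\psi}$ factors through $\Delta_0$, so $\bar{\psi}|_{\Delta'}$ is trivial. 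Running inflation--restriction for this extension yields
\[0\to H^1(\Delta_0,\F_p(\bar{\psi}))\to H^1(\Delta,\F_p(\bar{\psi}))\to H^1(\Delta',\F_p(\bar{\psi}))^{\Delta_0},\]
and the left term vanishes because $|\Delta_0|$ is coprime to $p$.

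The remaining and essential content of the argument is to show $H^1(\Delta',\F_p(\bar{\psi}))^{\Delta_0}=0$. Since $\bar{\psi}|_{\Delta'}$ is trivial, $H^1(\Delta',\F_p(\bar{\psi}))=\op{Hom}(\Delta',\F_p)$, and the $\Delta_0$-action is $(\sigma\cdot h)(g)=\bar{\psi}(\sigma)h(g)$ (the conjugation action on $\Delta'$ being trivial by abelianness of $\Delta$). A $\Delta_0$-invariant $h$ therefore satisfies $(\bar{\psi}(\sigma)-1)h(g)=0$ for all $\sigma\in\Delta_0$ and $g\in\Delta'$. The key input is that $\bar{\psi}=\nu^{\pm 1}=\bar{\chi}^{\pm k}$ with $3\le k\le(p-1)/2$ is a non-trivial character of $\Delta_0$ (it is precisely the character cutting out $L_0=\Q(\nu)$ inside $\Q(\mu_p)$); picking $\sigma\in\Delta_0$ with $\bar{\psi}(\sigma)\ne 1$ makes $\bar{\psi}(\sigma)-1$ a unit in $\F_p$, forcing $h=0$. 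The only place where genuine arithmetic input enters is the non-triviality of $\bar{\psi}$ on $\Delta_0$; everything else is formal group cohomology.
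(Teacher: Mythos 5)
Your argument is correct and follows essentially the same route as the paper: reduce to $H^1(\Delta,\Z/p^M\Z(\psi))=0$ via inflation--restriction, reduce further to the residual case $k=1$ by induction on the modulus, then use inflation--restriction again for $1\to\Delta'\to\Delta\to\Delta_0\to1$ (the paper calls your $\Delta'$ by the name $\mathcal{N}$), kill the $H^1(\Delta_0,\cdot)$ term by coprimality of orders, and kill the $\op{Hom}(\Delta',\F_p(\bar\psi))^{\Delta_0}$ term by observing that abelianness of $\Delta$ makes the conjugation action trivial while $\bar\psi$ is a nontrivial $\F_p^\times$-valued character of $\Delta_0$. The only cosmetic difference is that the paper writes the $\Delta_0$-action as $(\sigma\cdot h)(g)=\psi(\sigma)^{-1}h(\tilde\sigma g\tilde\sigma^{-1})$ rather than your $\bar\psi(\sigma)h(g)$; after the conjugation collapses these differ only by replacing $\sigma$ with $\sigma^{-1}$ and lead to the identical invariance equation and conclusion.
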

\begin{proof}
Recall that $\psi=\gamma_{i,j}$ is a character modulo-$p^M$ such that $i+j$ is stipulated to be odd. Since $f_{i,j}$ is chosen to be a generator of $H^1(\Gp, \Z/p^M\Z(\psi))$, it is clear that the injectivity of \eqref{lemma 5.2 restriction map} is equivalent to the condition that $p^k\op{res}(f_{i,j})\neq 0$ for all $k<M$. This in turn is equivalent to the condition that the homomorphism $\op{res}(f_{i,j})$ is surjective. 

From the inflation-restriction sequence (cf. \cite{neukirch2013cohomology}), it suffices to show that \[H^1\left(\Delta, \Z/p^M\Z(\psi)\right)=0.\] We point out here that \[\left(\Z/p^M\Z(\psi)\right)^{\op{Gal}(\Q_{\{p\}}/L)}= \left(\Z/p^M\Z(\psi)\right)\] since $\psi$, when restricted to $\op{Gal}(\Q_{\{p\}}/L)$ is trivial. First, we show that $H^1(\Delta, \Z/p\Z(\bar{\psi}))=0$. Let $\mathcal{N}$ denote the Galois group $\op{Gal}(L/L_0)$. Note that $\mathcal{N}$ is an abelian group since $\Delta:=\op{Gal}(L/\Q)$ is abelian. Furthermore, $\mathcal{N}$ is a $p$-group. Consider the inflation-restriction sequence
\[0\rightarrow H^1\left(\Delta_0, \Z/p\Z(\bar{\psi})\right)\rightarrow H^1\left(\Delta, \Z/p\Z(\bar{\psi})\right)\rightarrow H^1\left(\mathcal{N}, \Z/p\Z(\bar{\psi})\right)^{\Delta_0}.\]Since $\Delta_0$ is isomorphic to the image of the diagonal representation $\bar{\rho}$, it is easy to see that $p\nmid \# \Delta_0$. Therefore, it follows (from a standard application of restriction-corestriction) that $H^1\left(\Delta_0, \Z/p\Z(\bar{\psi})\right)=0$. We show that $\op{Hom}\left(\mathcal{N}, \Z/p\Z(\bar{\psi})\right)^{\Delta_0}=0$, and thus deduce that $H^1\left(\Delta, \Z/p\Z(\bar{\psi})\right)=0$. The action of $\Delta_0$ on $\mathcal{N}$ is induced via conjugation. In greater detail, given $\sigma\in \mathcal{N}$ and $x\in \mathcal{N}$, we choose a lift $\tilde{\sigma}\in \Delta$ of $\sigma$, and set $\sigma\cdot x:=\tilde{\sigma} x \tilde{\sigma}^{-1}$. Since $\mathcal{N}$ is abelian, the action is well defined and $\tilde{\sigma} x \tilde{\sigma}^{-1}$ is independent of the choice of lift $\tilde{\sigma}$. Since $\Delta$ is abelian, this action is trivial, i.e., $\sigma\cdot x=x$. Note that $\bar{\psi}\neq 1$, and therefore, it follows that $\op{Hom}\left(\mathcal{N}, \Z/p\Z(\bar{\psi})\right)^{\Delta_0}=0$. We have thus shown that $H^1\left(\Delta, \Z/p\Z(\bar{\psi})\right)=0$.
 \par To conclude the proof, we show that for all $k\leq M$, $H^1\left(\Delta, \Z/p^i\Z(\psi_k)\right)=0$. Here, $\psi_k$ is the mod-$p^k$ reduction of $\psi$. The result is proven by induction on $k$. The case when $k=1$ has been proven. Consider the short exact sequence 
 \[0\rightarrow \Z/p\Z(\bar{\psi})\rightarrow \Z/p^k(\psi_k)\rightarrow \Z/p^{k-1}(\psi_{k-1})\rightarrow 0,\]  and the associated exact sequence
 \[H^1\left(\Delta,\Z/p\Z(\bar{\psi}) \right)\rightarrow H^1\left(\Delta, \Z/p^k(\psi_k) \right)\rightarrow H^1\left(\Delta,  \Z/p^{k-1}(\psi_{k-1})\right).\]
 If we assume that 
 \[H^1\left(\Delta,\Z/p\Z(\bar{\psi}) \right)=0\text{ and }H^1\left(\Delta,  \Z/p^{k-1}(\psi_{k-1})\right)=0, \] then, it follows that $H^1\left(\Delta, \Z/p^k(\psi_k) \right)=0$. Therefore, by induction, it follows that 
 \[H^1\left(\Delta, \Z/p^M(\psi) \right)=0.\]
\end{proof}
We make final preparations for the proof of the main theorem. First, we recall some context. Note that $p\geq 7$ is a prime number such that the Assumption \ref{main ass} holds. Let $\bar{\rho}$ be the representation $\op{diag}(\alpha_1, \dots, \alpha_n)$ as defined in section \ref{s 2.2}. Recall that Proposition \ref{prop unobstructed} asserts that $\bar{\rho}$ is $\{p\}$-unobstructed, i.e., $H^2(\Gp, \op{Ad}\bar{\rho})=0$. Recall that the numbers $m, M, N$ are defined as follows
\[m:=3+\lfloor \op{log}_p(2^n+1)\rfloor, M:=m(n^2-n)+1\text{ and }N:=2M.\]Recall that $\beta$ be an $m$-admissible $N$-lift of $\alpha$ and $\gamma$ is the mod-$p^M$ reduction of $\beta$. In particular, $\gamma$ is also $m$-admissible. Let $\rho_N:=\rho_\beta$ (resp. $\rho_M:=\rho_\gamma$). As explained previously, it follows from Lemma \ref{lemma 5.1} that for each pair $(i,j)$ such that $1\leq i,j\leq n$ and $i+j$ is odd, $H^1\left(\Gp, \Z/p^M\Z(\gamma_{i,j})\right)\simeq \Z/p^M\Z$. Let $f_{i,j}$ be a generator of $H^1\left(\Gp, \Z/p^M\Z(\gamma_{i,j})\right)$ for such a pair $(i,j)$. Let $\mathfrak{B}$ be the set of all pairs $(i,j)$ such that $1\leq i, j\leq n$ and $i+j$ is odd. Set $f=\sum_{(i,j)\in \mathfrak{B}} f_{i,j}$, the sum is taken over the generators $f_{i,j}$. This is viewed as an element in \[H^1(\Gp, \op{Ad}\rho_M)=\bigoplus_{(i,j)} H^1\left(\Gp, \Z/p^M\Z(\gamma_{i,j})\right).\] Let $\rho_N'$ be the twist \begin{equation}\label{def of rho N prime}(\op{I}_n+p^M f) \rho_N,\end{equation} as defined in section \ref{s 3}. The representation $\rho_N'$ is a deformation of $\rho_M$.
\par By Proposition \ref{prop unobstructed}, $\bar{\rho}$ is $\{p\}$-unobstructed. Therefore, it follows from Proposition \ref{prop 4.1}, $\rho_N'$ lifts to a characteristic zero lift $\varrho:\Gp\rightarrow \op{GL}_n(\Z_p)$. We recall that from the line prior to the statement of Proposition \ref{prop bracket} that $\mu=\sum_{i=1}^n a_i e_{i,i}$, where $a_i:=\frac{1+(-1)^i}{2}$. The field $L$ is the field $\Q(\rho_M)$ cut out by $\rho_M$. 
\par The Galois stable submodule $\mathcal{M}$ of $\op{Ad}\rho_M$ is defined in the paragraph above Proposition \ref{mathcal M prop}. The definition of $\Phi_N(\varrho)$ is given in Definition \ref{def of Phi}. 
\begin{proposition}\label{prop 5.3}
There is a natural Galois equivariant isomorphism of \[\Phi_N(\varrho) \simeq  p^{M-1}\mathcal{M}.\]
\end{proposition}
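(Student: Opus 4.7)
The plan is to construct a natural $\Gp$-equivariant inclusion $\iota\colon p^{M-1}\mathcal{M}\hookrightarrow \Phi_N(\varrho)$ via the $p^M$-th power operation in $\op{GL}_n(\Z_p)$, and then to argue that $\iota$ is an isomorphism of $\Gp$-modules. The starting observation is that $\mathcal{M}$, sitting inside the abelian kernel $\A\simeq M_n(\Z/p^M\Z)$, is annihilated by $p^M$; consequently, for every $g\in \op{G}_L$ one has $\rho_N'(g)^{p^M}=\op{I}_n$ in $\op{GL}_n(\Z/p^N\Z)$, so that $g^{p^M}\in \op{G}_{L_N}$. This is verified by writing $\rho_N'(g)=\op{I}_n+p^M A$ with $A\in M_n(\Z/p^M\Z)$ and noting that in the expansion of $(\op{I}_n+p^M A)^{p^M}$ the $k=1$ term is $p^{2M}A\equiv 0\pmod{p^N}$, while for $k\geq 2$ the valuation $v_p\!\left(\binom{p^M}{k}p^{kM}\right)\geq 2M$ already forces vanishing modulo $p^N$, using $p$ odd to control $\binom{p^M}{2}$.

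Next I would lift the computation to characteristic zero. For $g\in \op{G}_L$, write $\varrho(g)=\op{I}_n+p^M C(g)$ with $C(g)\in M_n(\Z_p)$. A refined binomial expansion, using that $\binom{p^M}{2}p^{2M}=\tfrac{p^M(p^M-1)}{2}p^{2M}$ has $p$-adic valuation at least $3M$ for $p$ odd (and similarly all higher order terms), yields
\[
\varrho(g^{p^M})=\varrho(g)^{p^M}\equiv \op{I}_n+p^{2M}C(g)\pmod{p^{2M+1}},
\]
so that $D(g^{p^M})\equiv C(g)\pmod{p}$ in $M_n(\F_p)$. Thus $g\mapsto D(g^{p^M})$ defines a $\Gp$-equivariant homomorphism from $\op{G}_L$ into $\Phi_N(\varrho)$, with equivariance coming from $C(g_0 g g_0^{-1})\equiv \bar{\rho}(g_0)C(g)\bar{\rho}(g_0)^{-1}\pmod{p}$ for $g_0\in \Gp$. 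Under the natural identification of $p^{M-1}\mathcal{M}\subseteq p^{M-1}M_n(\Z/p^M\Z)\simeq M_n(\F_p)$ with the subset $\{C(g)\bmod p\,:\,g\in \op{G}_L\}$, this realizes the $\Gp$-equivariant inclusion $\iota$, and injectivity is immediate from the construction.

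The step I expect to be the main obstacle is the reverse inclusion $\Phi_N(\varrho)\subseteq p^{M-1}\mathcal{M}$. To handle this I would analyze $\widetilde{\mathcal{M}}:=\varrho(\op{G}_L)\bmod p^{N+1}$, which sits inside the $2$-step nilpotent subgroup $\op{I}_n+p^M M_n(\Z/p^{M+1}\Z)$ of $\op{GL}_n(\Z/p^{N+1}\Z)$ and fits into a short exact sequence $0\to \Phi_N(\varrho)\to \widetilde{\mathcal{M}}\to \mathcal{M}\to 0$. The aim is to show that $\Phi_N(\varrho)$ coincides with the image of the $p^M$-th power map on $\widetilde{\mathcal{M}}$, which in turn equals the mod-$p$ reduction of $\widetilde{\mathcal{M}}$, namely $\mathcal{M}\bmod p\simeq p^{M-1}\mathcal{M}$. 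Equivalently, one must rule out extra $p$-torsion appearing in $\widetilde{\mathcal{M}}$ beyond what is forced by the structure of $\mathcal{M}$; this is expected to follow from the fact that $\varrho$ is a genuine $\Z_p$-valued lift, combined with the unobstructedness of $\bar{\rho}$ recorded in Proposition \ref{prop unobstructed}, which controls precisely the passage from level $N$ to level $N+1$ and rules out spurious cohomology classes. Combined with the injectivity of $\iota$ established above, this yields the claimed $\Gp$-equivariant isomorphism $\Phi_N(\varrho)\simeq p^{M-1}\mathcal{M}$.
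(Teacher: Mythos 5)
Your $p^{M}$-th power construction is a genuinely different, and considerably more roundabout, route than the one the paper takes, and the half you flag as the hard part is where the plan misfires. The paper's proof is a direct unwinding of definitions: both $\mathcal{M}$ and $\Phi_N(\varrho)$ are read off from the very same representation $\varrho\bmod p^N=\rho_N'$, just at different depths. Concretely, $A\in\mathcal{M}$ means $\op{I}_n+p^M A$ lies in $\varrho_N(\op{G}_L)$; if moreover $A=p^{M-1}B$, then $\op{I}_n+p^M A=\op{I}_n+p^{N-1}B$, and the Galois element realizing this matrix automatically kills $\varrho_{N-1}$ and hence lies in $\op{Gal}(\Q_{\{p\}}/\Q(\varrho_{N-1}))$, which is precisely what puts $B$ in $\Phi_N(\varrho)$. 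Conversely any $B\in\Phi_N(\varrho)$ gives $\op{I}_n+p^{N-1}B\in\varrho_N(\op{G}_L)$, hence $p^{M-1}B\in\mathcal{M}$. No power maps, no binomial estimates, and no cohomology enter the argument: the identification is pure bookkeeping through $\iota(B)=p^{M-1}\tilde B$. Your binomial computation for the map $g\mapsto\log_N\left(\varrho_{N+1}(g^{p^M})\right)$ is correct, but it reproves the containment that is already immediate from the definitions.

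The direction you describe as the main obstacle, $\Phi_N(\varrho)\subseteq p^{M-1}\mathcal{M}$, is in fact the easy inclusion, not the hard one, once you compare at the right level as above. Your proposed remedy, passing to $\widetilde{\mathcal{M}}:=\varrho(\op{G}_L)\bmod p^{N+1}$ and invoking the unobstructedness $H^2(\Gp,\op{Ad}\bar\rho)=0$, is a red herring: that cohomological input has already been spent in Proposition \ref{prop 4.1} to produce the lift $\varrho$, and once a specific $\varrho$ is fixed the comparison between $\Phi_N(\varrho)$ and $\mathcal{M}$ is a statement about one concrete homomorphism, not about a deformation problem. There is also a subtler mismatch in targets: your construction delivers the set $\{C(g)\bmod p:g\in\op{G}_L\}$, which is the literal image $p^{M-1}\mathcal{M}$ of multiplication by $p^{M-1}$, whereas what the paper's argument actually identifies with $\Phi_N(\varrho)$, and what Lemma \ref{last lemma} actually uses, is the a priori larger submodule $\mathcal{M}\cap p^{M-1}M_n(\Z/p^M\Z)$. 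Lemma \ref{lemma 3.3} only places $p^{M-1}e_{i,j}$ inside $\mathcal{M}$, not $e_{i,j}$ itself, so one needs to be able to conclude $e_{i,j}\in\Phi_N(\varrho)$ from $p^{M-1}e_{i,j}\in\mathcal{M}$; the paper's identification delivers this directly, while yours would not without additional information about the $\Z/p^M\Z$-module structure of $\mathcal{M}$.
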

\begin{proof}
Consider the map 
\[\iota:\op{Ad}\bar{\rho}\rightarrow \op{Ad}\rho_M,\] which takes a matrix $A$ to $\iota(A):=p^{M-1} \tilde{A}$, where $\tilde{A}$ is any choice of lift of $A$ to $\op{Ad}\rho_M$. Clearly, $p^{M-1} \tilde{A}$ is independent of the choice of lift $\tilde{A}$, and we simply denote this product by $p^{M-1}A$. We show that $\iota $ induces the isomorphism $\Phi_N(\varrho) \xrightarrow{\sim}  p^{M-1}\mathcal{M}$. Let $A$ be a matrix in $\op{Ad}\rho_M$ and suppose that $A$ is contained in $\mathcal{M}$. This means that $\op{I}_n+p^{M} A$ is contained in the image of $\rho_N$. Now suppose that $A=p^{M-1} B$ for some matrix $B\in \op{Ad}\bar{\rho}$. Then, $\op{I}_n+p^{2M-1}B=\op{I}_n+p^{N-1}B$ is contained in the image of $\rho_N$. It thus follows that $B\in \Phi_N(\varrho)$. Conversely, for $B\in \Phi_N(\varrho)$, it is clear that $p^{M-1} B$ is in $\mathcal{M}$, by the same reasoning. The map $\iota$ is clearly Galois equivariant, and thus, induces the isomorphism $\Phi_N(\varrho) \xrightarrow{\sim}  p^{M-1}\mathcal{M}$.
\end{proof}

\begin{lemma}\label{last lemma}
With respect to notation above, $\Phi_N(\varrho)$ contains $\mu$ and all matrices $e_{i,j}$ such that $i+j$ is odd. 
\end{lemma}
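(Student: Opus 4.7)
By Proposition \ref{prop 5.3}, the problem reduces to showing that $\mathcal{M}$ contains $p^{M-1}e_{i,j}$ for each $(i,j) \in \mathfrak{B}$, together with $p^{M-1}\mu$. Both containments will be obtained by exhibiting suitable test elements in $\mathcal{M}$ and then extracting the desired components via Lemma \ref{lemma 3.3}. Observe that for $\sigma \in G_L$, the image of $\rho_N'(\sigma)$ in $\mathcal{A} \cong \op{Ad}\rho_M$ decomposes additively as $D(\sigma) + f(\sigma)$, where $D(\sigma) := \sum_i b_i(\sigma) e_{i,i}$ with $b_i(\sigma) := (\beta_i(\sigma)-1)/p^M \in \Z/p^M\Z$, encodes the diagonal contribution of $\rho_N$, while $f(\sigma) = \sum_{(s,t) \in \mathfrak{B}} f_{s,t}(\sigma) e_{s,t}$ is purely off-diagonal.

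For the off-diagonal generators, fix $(i,j) \in \mathfrak{B}$. Lemma \ref{lemma 5.2} yields $\tau \in G_L$ with $f_{i,j}(\tau)$ a unit modulo $p$. The element $x = D(\tau) + f(\tau) \in \mathcal{M}$ then has $(i,j)$-coefficient $y_{i,j} = f_{i,j}(\tau)$ coprime to $p$, and Lemma \ref{lemma 3.3}(1) delivers $p^{m(n^2-n)} e_{i,j} = p^{M-1} e_{i,j} \in \mathcal{M}$. The task for $\mu$ is then to find $x \in \mathcal{M}$ whose diagonal projection $x_0$ satisfies $x_0 \equiv \mu \mod{p}$; Lemma \ref{lemma 3.3}(2) together with the equality $p^{M-1}x_0 = p^{M-1}\mu$ in $\op{Ad}\rho_M = M_n(\Z/p^M\Z)$ would then close the argument.

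To produce such an $x$, I plan to exploit the explicit choice $k_i = 2^i p + a_i k$ from Lemma \ref{choice of m}. Since $3 \leq k \leq (p-1)/2$, the integer $k$ is invertible modulo $p^M$; let $k^*$ denote its inverse. Pick any $\sigma \in \Gp$ with $\chi(\sigma) \equiv 1 + k^* p^M \mod{p^N}$, which exists because $\chi$ surjects onto $(\Z/p^N\Z)^\times$. A binomial expansion gives $\chi(\sigma)^{k_i} \equiv 1 + k^* k_i p^M \mod{p^{2M}}$, so $\gamma_i(\sigma) \equiv 1 \mod{p^M}$ (placing $\sigma$ in $G_L$), and
\[
b_i(\sigma) \equiv k^* k_i \equiv k^*(2^i p + a_i k) \equiv a_i \mod{p}.
\]
Hence $x := D(\sigma) + f(\sigma) \in \mathcal{M}$ satisfies $x_0 = D(\sigma) \equiv \mu \mod{p}$, completing the plan.

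The main obstacle lies in this last step: constructing a single $\sigma \in G_L$ whose diagonal residues $b_i(\sigma) \bmod p$ simultaneously realize the prescribed pattern $(a_1, \ldots, a_n) = (0,1,0,1,\ldots)$. Its success hinges on the interaction between the valuations $v_p(k_i) \in \{0,1\}$ engineered by Lemma \ref{choice of m} and the coprimality of $k$ with $p$; these together ensure that rescaling by $k^*$ kills the odd-indexed residues modulo $p$ while sending the even-indexed ones to $1$. All remaining steps are direct appeals to the cited lemmas and to Proposition \ref{prop 5.3}.
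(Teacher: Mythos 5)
Your proof is correct and follows essentially the same route as the paper: produce test elements of $\mathcal{M}$ from the twist $\rho_N'$, extract the $e_{i,j}$- and diagonal components via Lemma \ref{lemma 3.3}, and transfer to $\Phi_N(\varrho)$ through Proposition \ref{prop 5.3}. You are in fact more careful than the paper on the $\mu$-component: the paper asserts that $\chi(g)=1+p^M$ gives $\rho_N(g)=\op{I}_n+p^M\mu$, but one actually gets $\op{I}_n+p^M\operatorname{diag}(k_1,\dots,k_n)$, whose mod-$p$ reduction is a unit multiple of $\mu$ (namely $k\mu$) only for the explicit exponents $k_i=2^ip+a_ik$ of Lemma \ref{choice of m}; your rescaling of the cyclotomic value by $k^{*}=k^{-1}$ is the correct normalization and also makes the paper's implicit dependence on that specific choice of $\beta$ explicit.
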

\begin{proof}
Recall that for each pair $(i,j)\in \mathfrak{B}$, Lemma \ref{lemma 5.2} shows that the restricted homomorphism $f_{i,j}:\op{Gal}(\Q_{\{p\}}/L)\rightarrow \Z/p^M\Z$ is surjective. Therefore, for $(i,j)\in \mathfrak{B}$, there exists $g_{i,j}\in \op{Gal}(\Q_{\{p\}}/L)$ such that $f_{i,j}(g_{i,j})=1$. We write $\rho_N'(g_{i,j})=\op{I}_n+p^M x$. Then by construction the $e_{i,j}$-component of $x$ is $1$, and hence, by Lemma \ref{lemma 3.3}, $\mathcal{M}$ contains $p^{m(n^2-n)}e_{i,j}=p^{M-1} e_{i,j}$ for each pair $(i,j)\in \mathfrak{B}$. By Proposition \ref{prop 5.3}, $p^{M-1} \mathcal{M}=p^{m(n^2-n)}\mathcal{M}$ can be identified with $\Phi_N(\rho)$. We make this identification. It follows that for each pair $(i,j)\in \mathfrak{B}$, $\Phi_N(\rho)$ contains $e_{i,j}$.
\par Let $g$ in $\op{Gal}(\Q_{\{p\}}/L)$ be an element such that $\chi(g)=1+p^M$. Then, we find that $\rho_N(g)=\op{I}_n+\mu p^M$. Let $x$ be the element such that $\rho_N'(g)=\op{I}_n+p^Mx$. Since $f(g)$ consists of $0$s along the diagonal, it follows that the diagonal projection $x_0=\mu$. Lemma \ref{lemma 3.3} then implies that $\mathcal{M}$ contains $p^{m(n^2-n)}\mu$. Hence, By Proposition \ref{prop 5.3} implies that $\Phi_N(\varrho)$ contains $\mu$. 
\end{proof}

\begin{theorem}\label{main thm}
Let $p\geq 7$ be a prime and $M:=(n^2-n)\left(3+\lfloor \op{log}_p(2^n+1)\rfloor\right)+1$.Suppose that Assumption \ref{main ass} holds. Then, there are infinitely many Galois representations $\varrho:\Gp\rightarrow \op{GL}_n(\Z_p)$ whose image contains $\mathcal{U}_{8M}$. 
\end{theorem}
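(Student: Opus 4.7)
My plan is to assemble the ingredients prepared in Sections \ref{s 2}--\ref{s 4} into the final conclusion; the remaining work is essentially bookkeeping, verifying that a single carefully chosen $\varrho$ has the properties required by Lemma \ref{last lemma} and then feeding those into Proposition \ref{main prop}.

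First I would fix an $m$-admissible $N$-lift $\beta$ of $\alpha$, which exists by Lemma \ref{choice of m}, and form the diagonal representation $\rho_N=\rho_\beta$ together with its mod-$p^M$ reduction $\rho_M$. Next, using Lemmas \ref{lemma 5.1} and \ref{lemma 5.2}, I would select, for each pair $(i,j)\in \mathfrak{B}$, a generator $f_{i,j}$ of $H^1(\Gp,\Z/p^M\Z(\gamma_{i,j}))$ whose restriction to $\op{Gal}(\Q_{\{p\}}/L)$ surjects onto $\Z/p^M\Z$, and form the twist $\rho_N'=(\op{I}_n+p^M f)\rho_N$ as described before \eqref{def of rho N prime}, where $f=\sum_{(i,j)\in\mathfrak{B}} f_{i,j}$. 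The unobstructedness statement $H^2(\Gp,\op{Ad}\bar\rho)=0$ from Proposition \ref{prop unobstructed} then lets me invoke Proposition \ref{prop 4.1} to obtain a continuous characteristic zero lift $\varrho:\Gp\rightarrow \op{GL}_n(\Z_p)$ of $\rho_N'$ with any prescribed determinant character $\tau$ lifting $\det\rho_N'$.

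For any such $\varrho$, Lemma \ref{last lemma} shows that $\Phi_N(\varrho)$ contains the diagonal element $\mu$ and every $e_{i,j}$ with $i+j$ odd. I would then apply Proposition \ref{main prop} with its parameter $m$ taken to be $N$; this yields that $\Phi_{4N}(\varrho)$ contains $\op{Ad}^0\bar\rho$ and, consequently, that the image of $\varrho$ contains $\mathcal{U}_{4N}$. Since $N=2M$, we have $4N=8M$, so the image contains $\mathcal{U}_{8M}$, as required.

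To produce \emph{infinitely many} such representations, I would vary the determinant. By Proposition \ref{prop 4.1} one may prescribe any character $\tau:\Gp\rightarrow \Z_p^\times$ lifting $\det\rho_N'$, and the set of such $\tau$ is infinite (for instance, one may multiply a given $\tau$ by $\chi^{(p-1)p^N \cdot j}$ for $j\in \N$, producing pairwise distinct characters all congruent to $\det\rho_N'$ modulo $p^N$). Distinct determinants give distinct representations $\varrho$; each has mod-$p^N$ reduction $\rho_N'$ and hence the same $\Phi_N$, so the image containment $\mathcal{U}_{8M}\subseteq \op{Im}(\varrho)$ is uniform across the family. The real difficulty has already been absorbed by the preparatory work, particularly in arranging through the admissibility condition and the module-theoretic analysis of Lemma \ref{lemma 3.3} and Proposition \ref{prop 5.3} that $\Phi_N(\varrho)$ is large enough to feed into Proposition \ref{main prop}; once those pieces are assembled, Theorem \ref{main thm} is a short synthesis.
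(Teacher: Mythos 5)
Your proposal is correct and follows essentially the same route as the paper's proof: construct the admissible lift $\rho_N$, twist by the class $f$ to get $\rho_N'$, lift to characteristic zero via unobstructedness (Propositions \ref{prop unobstructed} and \ref{prop 4.1}), apply Lemma \ref{last lemma} and then Proposition \ref{main prop} with parameter $N=2M$ to conclude $\mathcal{U}_{4N}=\mathcal{U}_{8M}$ lies in the image, and vary the determinant to obtain infinitely many such $\varrho$. The only difference is that you spell out the infinitude step (explicitly exhibiting a family of determinant characters $\chi^{(p-1)p^N j}$) slightly more than the paper, which simply invokes Proposition \ref{prop 4.1}.
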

\begin{proof}
Recall the $N$ is set to be $2M$ and let $\varrho$ be as in the discussion above.  According to the Lemma \ref{last lemma}, $\Phi_N(\varrho)$ contains $\mu$ and $e_{i,j}$ for all $(i,j)$ such that $i+j$ is odd. It follows from Proposition \ref{main prop} that the image of $\varrho$ contains $\mathcal{U}_{4N}=\mathcal{U}_{8M}$. It follows from Proposition \ref{prop 4.1}, that there are infinitely many Galois representations $\varrho$ that lift $\rho_N'$, and hence, infinitely many Galois representations unramified outside $\{p, \infty\}$ and with image containing $\mathcal{U}_{8M}$.
\end{proof}
\bibliographystyle{alpha}
\bibliography{references}
\end{document}